\newcommand{\R}{\mathbb{R}}
\newcommand{\io}{\displaystyle\int_{\Omega}}
\newcommand{\N}{\mathbb{N}}
\newcommand{\weakly}{\rightharpoonup} 
\newcommand{\mean}{-\!\!\!\!\!\!\!\int}
\newcommand{\wsc}{\stackrel{*}{\rightharpoonup}}
\newcommand{\nat}{{\Bbb N}}
\newcommand{\U}[1]{u^n_{#1}} 
\newcommand{\Dn}[1]{D^n_{#1}}
\newcommand{\osd}[1]{\overset{#1}{\cd}}
\newcommand{\os}[1]{\overset{#1}{\lra}}
\newcommand{\lra}{\longrightarrow}
\newcommand{\normm}[1]{\| #1 \|}
\newcommand{ \calG}{\mathcal{G}}
\newcommand{ \calF}{\mathcal{F}}
\newcommand{ \G}{\stackrel{G}{\longrightarrow}}
\newcommand{\ho}{H^1_0(\Omega;\R^N)}
\newcommand{\ol}{\overline}
\newcommand{\eq}{\begin{equation}}
\newcommand{\eneq}{\end{equation}}
\newcommand{\al}{\begin{align}}
\newcommand{\enal}{\end{align}}
\newcommand{\cd}{\rightharpoonup}
\begin{document}

	\newtheorem{theorem}{Theorem}[section]
	\newtheorem{lemma}[theorem]{Lemma}
	\newtheorem{proposition}[theorem]{Proposition}
	\newtheorem{corollary}[theorem]{Corollary}
	
	\theoremstyle{definition}
	\newtheorem{definition}[theorem]{Definition}
	\newtheorem{remark}[theorem]{Remark}
	\newtheorem{example}[theorem]{Example}
	\newtheorem{problem}[theorem]{Problem}
	
	\numberwithin{equation}{section}

	\renewcommand{\subjclassname}{%
		\textup{2010} Mathematics Subject Classification}


\title[Damage dynamics: A variational approach]{Damage dynamics: A variational approach}
\author[A. Garroni]{Adriana Garroni}
\author[C.J. Larsen]{Christopher J. Larsen}
\author[D. Sarrocco]{David Sarrocco}

\keywords{Damage dynamics, $G$-Convergence, Homogenization in dynamics, Threshold Conditions}
\subjclass[2010]{{35B27, 35J20, 74C10}}

\maketitle

\begin{abstract}
In this paper we construct, by means of a variational formulation, the solutions of a problem of elastodynamics which includes the effect of damage for the elastic material. The result is a wave equation with time dependent operators which represents the elastic coefficients of the material undergoing damage. The dynamics that we construct also satisfies a threshold condition with the same threshold value that characterizes the quasi-static evolution of damage (see \cite{GL}).
\end{abstract}

\maketitle 

\section{Introduction}

Material defects, such as fracture, plasticity, and damage, have been extensively studied using variational models. A common feature of these phenomena beyond elasticity is the presence of non-smooth solutions (which have discontinuities, topological singularities, etc..) for which classical PDE's methods are not available.  The main advantage of variational formulations is then the ease of showing existence of global minimizers, which can be used via incremental minimization in order to construct quasi-static evolutions. On the other hand evolution in many cases follows local minima rather than global.
Moreover in many applications inertia cannot be neglected and it is important to have a robust definition for the dynamics of defects. Many approaches are possible in order to deal with dynamics and define satisfactory evolutions (ranging from vanishing viscosity regularizations, and higher order regularizations, to dynamics of phase field models, see e.g. \cite{LRTT} in the case of damage, and \cite{MiRo} and the references therein for a general review). 

An important observation is that underlying these variational models is often a threshold criterion - fracture occurs where the stress is sufficiently large or has a sufficiently large singularity, plastic deformations occur where the stress reaches the yield surface, and materials undergo damage where the stress exceeds a threshold.  In particular threshold criteria are localized and should guide dynamics.
Nevertheless the correspondence between these variational approaches and the threshold criterion can be unclear. 

In this paper we consider a model for  brittle damage which was formulated  and analyzed in \cite{FM} and  subsequently refined in \cite{FG} and \cite{GL}. 
The model proposed by \cite{FM} is simple but within reach. It assumes that only two states are possible, undamaged and damaged. These states are given by two elastic well-ordered tensors, $A_s$ and $A_w$, and the energy of each displacement $u$ with damage region $D$ is given by
\begin{equation}\label{energygeneral}
\int_\Omega \chi_D A_we(u) + (1-\chi_{D})A_se(u) dx + k|D|- \int_\Omega f\,u\,dx,
\end{equation}
where $e(u)=\frac{\nabla u+ \nabla u^T}{2}$ is the symmetrized gradient of $u$. The energy accounts for the elastic energy stored in a damaged material and penalizes the volume of the damaged region. The constant $k$ can be viewed as the cost (per volume) for the material to undergo damage, or the energy dissipated (per volume) when the material undergoes damage. The function $f$ here represents a dead load.

In the particular case of anti-plane elasticity in dimension $2$, or more in general in the scalar case in dimension $N$, and assuming that $A_s$ and $A_w$ are isotropic, i.e, $A_s =\beta I$ and $A_w=\alpha I$, with $0<\alpha<\beta$, the energy is rewritten as
\begin{equation}\label{energy}
\int_\Omega \chi_D \alpha|\nabla u|^2 + (1-\chi_{D})\beta|\nabla u|^2 dx + k|D| - \int_\Omega f\,u\,dx.
\end{equation}

In \cite{GL}, for the latter case, the first two authors showed that  minimizers of this variational model (and the corresponding quasi-static evolution) satisfy a threshold condition, which states that there exists a value $\lambda$ which depends on the energy, and then  on $\alpha$, $\beta$ and $k$, such that, whenever the modulus of the strain $\nabla u$ exceeds the value $\lambda$ the material is damaged. In particular if $(u,D)$ is a pair minimizing the energy \eqref{energy} subject to boundary conditions or body forces, then one proves that $D=\{x\in \Omega:\ |\nabla u|>\lambda\}$.

  A natural question is then, if we use the variational formulation for damage, but in a dynamic setting, do we obtain solutions of the threshold problem?

  An additional complication is that minimizers might not exist, and the appropriate relaxed formulation needs to be studied instead.
  This can be easily seen by rewriting the energy as follows
  $$
  \int_\Omega W(\nabla u) \,dx - \int_\Omega f\,u\,dx\,,
  $$
  and 
  $$
  W(\varepsilon) =\min\left\{\frac{1}{2}\beta|\varepsilon|^2\,,\ \frac{1}{2}\alpha|\varepsilon|^2 +k\right \}\,.
  $$
  This energy density is not convex (and not quasi-convex in the general vector valued case of \eqref{energygeneral}, see e.g. \cite{AK}), thus in the minimization procedure we expect microstructure, which requires relaxation.
  The convex envelope of $W$ can be represented as follows
  $$
  W^{**}(\varepsilon)=\min_{\Theta\in [0,1]}\min_{A\in G_\Theta(\alpha,\beta)}\left\{\frac{1}{2}A\varepsilon\, \varepsilon +k\Theta\right\}\,,
  $$
  where $G_\Theta(\alpha,\beta)$ is the $G$-closure of $\alpha I$ and $\beta I$ mixed with volume fractions $1-\Theta$ and $\Theta$ (see \cite{FG} and below for more detail). 
  
  The relaxed quasi-static evolution constructed in \cite{FG} and then refined in \cite{GL}, in particular involves a rich class of damaged materials characterized by elastic tensors in the $G$-closure of the two states without any a priori assumption on their structure.
  
  Moreover in \cite{GL} it is also obtained a relaxed threshold condition, and showed that solutions, both static and quasi-static, of the relaxed variational problem satisfy the relaxed threshold condition.
  
  Then an important question is, since the variational problem needs to be relaxed, can we prove existence of solutions to the dynamic relaxed variational damage problem, and are they solutions to an appropriate relaxed threshold problem?  
  
  In this paper, we prove this existence, in
 the scalar case when the two states (damaged and undamaged) are isotropic, and we show that solutions are also relaxed threshold solutions. The condition that we prove in Theorem \ref{EA} involves that same value for the threshold that was found in the quasi-static case (where the threshold condition was also proved to be necessary). The question whether this value for the threshold is optimal also in the dynamic case is still open. This question is also related to the possibility of defining an alternative dynamics of damage based directly on the threshold condition (as a unique criterion in order to switch from the undamaged state to the damaged one) as discussed in \cite{GL}.
   
  Our strategy is to formulate a discrete in time variational approximation of the dynamical problem following the line used in \cite{DM-La} and \cite{LoS}  for the dynamic of fracture (see also \cite{DMDL} for the variational approach via time regularization proposed in \cite{ST1} and \cite{ST2}). We then  show convergence of the discrete dynamics in the spirit of the result of \cite{CCFM} for the homogenization of the wave equation with time dependent coefficients (see also \cite{To}).  
  We notice that in general there are issues concerning the well posedness of the wave equation with discontinuous coefficients  (see  \cite{CS2}, \cite{HS}, \cite{FM-92}, \cite{B-OFM},  \cite{Lu} and the references therein). 
  Here a key  property which guarantees well posedness is the monotonicity in time of the (possibly discontinuous) elastic coefficients, which is a consequence of the irreversibility of damage.

\section{The variational model and its relaxation}\label{sec-var-pb}

We fix $0<\alpha<\beta$ and a bounded domain $\Omega\subseteq\R^N$, $N\geq 1$. 
Given $f\in H^{-1}(\Omega)$, we consider the energy functional
\begin{equation}\label{eaeq}
E_0(u,D):=\frac{1}{2}\int_{\Omega}\sigma_D |\nabla u|^2dx +k|D| -\langle f,u\rangle
\end{equation}
where $\sigma_D=\alpha \chi_D +\beta(1-\chi_D)$ is the elastic coefficient of a material that undergone damage in the region $D\subseteq\Omega$. For definiteness we consider $u\in H^1_0(\Omega)$, i.e., zero boundary conditions for $u$. A minimizing sequence for this energy corresponds to a sequence of sets $D_n$, with the corresponding displacements $u_n$. 
As already noticed in the Introduction this energy may need to be relaxed and a minimizing pair $(u,D)$ may fail to exists.
The right notion in order to characterize the limits of minimizing sequences, and then the framework for the relaxed problem, is the
$G$-convergence of the elliptic operators associated to the coefficients $\sigma_{D_n}$
(see, e.g.,  \cite{DS}, or \cite{MuTa97} for the more general case of nonsymmetric linear operators and $H$-convergence).
We recall here the main features of this notion, specialized to the case under consideration.

Consider a sequence $A^n \in L^\infty(\Omega; \R^{N\times N})$,
we say that $A^n
\G A$, $A \in L^\infty(\Omega; \R^{N\times N})$,  if,  for every $f \in H^{-1}(\Omega;\R^N)$, the solutions $u^n$ of the equilibrium equations 
\[- \hbox{div}(A^n\nabla u^n) =f, \quad u^n \in \ho,\]
satisfy
\begin{equation}
\label{G-conv}
\left\{\begin{array}{l}u^{n} \rightharpoonup u, \mbox{ weakly in } \ho\\[4mm] A^{n}\nabla u^{n} \rightharpoonup A \nabla u, 
\mbox{ weakly in } L^2(\Omega;\R^{N\times N}),\end{array}\right.
\end{equation}
where $u$ is the solution of 
\[- \hbox{div}(A \nabla u) =f.
\]

Note that in the case of symmetric matrices (tensors for the vector valued case in general linearized elasticity) the first property of (\ref{G-conv}) is enough to characterize $G$-convergence and the second condition, which is in turn essential in the nonsymmetric case,  can be obtained as a consequence.

We denote
$$\calF(\alpha,\beta):= \{ B\in \R^{N\times N}, \mbox{ symmetric, such that }\alpha |\xi|^2\leq B\xi\xi \leq \beta|\xi|^2\}
$$
and list below
the main properties of $G$-convergence in the class $L^\infty(\Omega; \calF(\alpha,\beta))$:
\begin{enumerate}
	\item {\it Compactness}: for any sequence $A^n \in L^\infty(\Omega; \calF(\alpha,\beta))$, there exists a subsequence, $A^{k(n)}$, and
	$A\in  L^\infty(\Omega; \calF(\alpha,\beta))$ such that $A^{k(n)} \G A$;
	\item {\it Convergence of the energy}: if $A^n \G A$, then, with $u^n$ and $u$ defined as above,
	\[\io A^n\nabla u^n\nabla u^n\; dx \to \io A\nabla u\nabla u\; dx;\]
	\item  {\it Metrizability}: $G$-convergence is associated to a metrizable topology on $L^\infty(\Omega; \calF(\alpha,\beta))$;
	\item {\it Ordering}: if $B^n\leq A^{n}$ and $B^{n}\G B$, $A^{n} \G A$, then $B \leq A$
	(the inequalities are in the sense of quadratic forms);
	\item {\it Locality}: if $B^{n}\G B$, $A^{n} \G A$, and $\chi$ is a characteristic
	function on $\Omega$, then $\chi B^{n}+(1-\chi)A^n\G 
	\chi B+(1-\chi) A$;
	\item {\it Periodicity}: if $A^n(x):= A(nx)$, with $A \in L^\infty([0,1]^N;\calF(\alpha,\beta))$ periodic, then the whole
	sequence $A^{n}$ $G$-converges to $A^0$, which is the constant matrix given by
	\begin{equation*} 
	A^0 \xi\xi = \inf_{\varphi \mbox{ periodic }}\int_{[0,1]^N}A(y) (\xi+\nabla\varphi)(\xi+\nabla\varphi)\; dy\quad \forall \xi\in\R^N.
	\end{equation*}

\end{enumerate}

	In the case of a two-phase isotropic material, we consider $ A^n=\chi_D(nx) \alpha I+ (1-\chi_D(nx)) \beta I$ with $\chi_D$ a characteristic function of a set $D\subseteq[0,1]^N$ extended by periodicity to the whole of $\R^N$, and we speak of periodic mixtures with volume fraction  $\Theta:= |D|$ of material
$\alpha$. 

Here with a little abuse of notation and terminology we identify the matrix $A^n$ with
$$
\sigma_D(nx)=\chi_D(nx) \alpha + (1-\chi_D(nx)) \beta .
$$
The set 
of all $G$-limits resulting from the periodic mixture of $\alpha $ and $\beta $ with volume fractions
$\Theta$ and $1-\Theta$ is denoted by $G_\Theta(\alpha,\beta)$. 

The relevance of this set  is clarified by a famous unpublished result of localization due to Dal Maso and Kohn (see \cite{Ra} for the nonlinear case). It claims that the range of all possible mixtures of $\alpha$ and $\beta$ is given by periodic homogenization. More precisely if  $\Theta \in L^\infty(\Omega,[0,1])$ and we denote by $\calG_\Theta(\alpha,\beta)\subset
L^\infty(\Omega; \calF(\alpha,\beta))$ the set of all possible 
$G$-limits of $\sigma_{\chi^n}$, where $\chi^n\wsc\Theta$, then
\begin{equation}\label{g-gper}\calG_\Theta(\alpha,\beta)= \{ A \in L^\infty(\Omega; \calF(\alpha,\beta)): A(x) \in \ol G_{\Theta(x)}(\alpha,\beta), \mbox{ a.e. in }
\Omega\}.\end{equation}
The set of all possible mixtures of $\alpha$ and $\beta$, as the volume fraction varies from point to point, is the $G$-closure of $\alpha$ and $\beta$ and will be denoted by $\calG(\alpha,\beta)$.

One then shows that minimizing sequences $(u_n, D_n)$ of the energy $E_0(u,D)$  converge (up to a subsequence) to pairs $(u,A)$, with $u\in H^1_0(\Omega)$ and $A\in \calG(\alpha,\beta)$, in the sense that $u_n$ converges weakly to $u$ in $H^1(\Omega)$ and $\sigma_{D_{n}}$ $G$-converges to $A\in \calG_\Theta(\alpha,\beta)$, for some $\Theta\in L^\infty(\Omega,[0,1])$, which is the weak$^*$ limit of $\chi_{D_n}$, and the corresponding minimal energy is given by
$$
\frac{1}{2}\int_{\Omega}A\nabla u\nabla u dx +k\int_\Omega \Theta dx -\langle f,u\rangle.
$$

The latter considerations  illustrates what happen for one time step (the first one) of a variational model describing an evolution of damage. Now if we consider a time dependent body force $f\in W^{1,1}([0,T];H^{-1}(\Omega))$ the corresponding (unrelaxed) energy is given by
\begin{equation}\label{eaeq}
E(t,u,D):=\frac{1}{2}\int_{\Omega}\sigma_D |\nabla u|^2dx +k|D| -\langle f(t),u\rangle.
\end{equation}
A (relaxed) quasi-static evolution of this model (in which  inertia is neglected) is given by a family of triples $(u(t), A(t), \Theta(t))$ with  $A(t)\in \calG_{\Theta(t)}(\alpha,\beta)$, costructed in \cite{GL}, which satisfy a stability condition (ideally in a quasi-static evolution the configurations are local minima for the total energy at each time), an irreversibility property, and conservation of energy (see \cite{GL}, Definition 3 and Theorem 7).

The setting is now ready to include inertia and consider damage dynamics.
 In view of the above considerations the natural space in which we look for the relaxed damage dynamics is give by the set $\calG(\alpha,\beta)$.
Therefore, given the initial conditions $(u_0,v_0)\in H^1_0(\Omega)\times L^2(\Omega)$, respectively for the initial displacement and the corresponding velocity, and a suitable initial damage set $D_0\subset\Omega$ as described below, and a forcing term $f\in W^{1,1}([0,T];L^2(\Omega))$, for $T>0$,
 we will construct a time dependent operator $A(t,\cdot)\in \calG(\alpha,\beta)$ and a function $u\in W^{1,\infty}(0,T;L^2(\Omega))\cap L^{\infty}(0,T; H^1_0(\Omega))$,
satisfying, in the weak sense, the following problem
\begin{equation}
\begin{cases}
\ddot{u}(t,x) - {\rm div}(A(t,x)\nabla u(t,x))=f(t,x)\quad &\hbox{in} \ (0,T)\times\Omega\\
u(0,x)=u_0(x) &\hbox{in } \Omega\\
\dot{u}(0,x)=v_0(x) &\hbox{in } \Omega. 
\end{cases}
\end{equation}
The matrix $A(t,\cdot)$ then represents the elastic coefficient of the damaged material at time $t$ and the function $u$ the corresponding elasto-dynamic solution.

\section{The incremental problem and the main result}\label{sec-main}

In what follows for functions depending on time and space variables, $t$ and $x$, we will often write explicitly only the dependence on $t$. Moreover for the derivative with respect to $t$ we will use both notation $\dot{u}$ and $\partial_t u$.

We start by defining the incremental problems needed to construct our dynamics. 
For every $n\in\N$,  we fix a time scale $\Delta t=\tau_n$, with $\tau_n\to 0$,
and we consider a partition of the time interval $[0,T]$ given by points $t^n_0=0$ and $t^n_i$ with $i\geq 1$ such that $t^n_{i}-t^n_{i-1}=\Delta t$. To avoid heavy notation, sometimes we write $t_i$ instead of $t^n_i$.\\

Starting from initial conditions $D_0\subseteq\Omega$  and $(u_0,v_0)\in H^1_0(\Omega)\times L^2(\Omega)$ we define $(u^n_0,D^n_0):=(u_0,D_0)$ and iteratively $(u^n_i,D^n_i)\in H^1_0(\Omega)\times \mathcal{P}(\Omega)$  as follows:
\begin{itemize}
	\item  We first choose $(\tilde u_1^n, D_1^n)\in H^1_0(\Omega)\times \mathcal{P}(\Omega)$ such that
	\begin{equation}\label{incremental0}
	\begin{split}
	E(t^n_1,\tilde u_1^n, D_1^n)  &+\frac{1}{2} \left \| \frac{\tilde u_1^n-u_{0}}{\Delta t}- v_0   \right\|^{2}_{L^2}\\
	&\leq  \inf_{u\in H^1_0\,;\ D\supseteq D_{0}} E(t^n_1, u, D)  +\frac{1}{2} \left\| \frac{u-u_{0}}{\Delta t}- v_0   \right\|^{2}_{L^2} +\frac{\tau^2_n}{2 }.
	\end{split}
	\end{equation}
	Then, with fixed $D_1^n$, we define $u_1^n$ be the minimizer in $H^1_0(\Omega)$ of 
	\begin{equation}\label{mind0}
	E(t^n_1, u, D_1^n)  +\frac{1}{2} \left \| \frac{u-u_{0}}{\Delta t}- v_0   \right\|^{2}_{L^2}.
	\end{equation}
	\item Analogously for $i\geq 1$ we choose $(\tilde u_{i+1}^n, D_{i+1}^n)\in H^1_0(\Omega)\times \mathcal{P}(\Omega)$ such that
	\begin{equation}\label{incremental}
	\begin{split}
	& E(t^n_{i+1}, \tilde u_{i+1}^n, D_{i+1}^n) +\frac{1}{2} \left \| \frac{\tilde u_{i+1}^n-u_{i}^n}{\Delta t}- \frac{ u_{i}^n-u_{i-1}^n}{\Delta t}   \right\|^{2}_{L^2}\leq \cr & \inf_{u\in H^1_0\,; \ D\supseteq D_{i}^n}  E(t^n_{i+1}, u, D)  +\frac{1}{2} \left\| \frac{u-u_{i}^n}{\Delta t}- \frac{ u_{i}^n-u_{i-1}^n}{\Delta t}   \right\|^{2}_{L^2} +\frac{\tau^2_n}{2^i }
	\end{split}
	\end{equation}
	and define $u_{i+1}^n$ to be the minimizer in $H^1_0(\Omega)$ of 
	\begin{equation}\label{mind}
	E( t^n_{i+1}, u, D_{i+1}^n)  +\frac{1}{2} \left \| \frac{u-u_{i}^n}{\Delta t}- \frac{u_{i}^n-u_{i-1}^n}{\Delta t}   \right\|^{2}_{L^2}.
	\end{equation}
\end{itemize}

We define $D_n(0):=D_0$ and $u_n(0)=u_0$, and for every $t\in(t_i^n,t_{i+1}^n]$ we define the following piecewise constant  (and piecewise affine) functions
\begin{equation}\label{disc1}
\begin{split}
&\tilde u_n(t):=u_{i+1}^n
\qquad\quad
u_n(t):=u_i^n + (t-t_i^n)\frac{u_{i+1}^n-u_i^n}{\Delta t} \\
&D_n(t):=D_{i+1}^n \qquad\quad f_n(t):=f(t^{n}_{i+1}).
\end{split}
\end{equation}

Note that since $f\in W^{1,1}(0,T; L^2\Omega)$ then for all  $t\in(0,T)$
\begin{equation}\label{f-fn}
\left\|  f_n \right\|_{L^2(0,T; L^2(\Omega))}\leq C \left\|  f \right\|_{L^2(0,T; L^2(\Omega))}
\end{equation}
and 
$$
\|f_n(t)-f(t)\|_{L^2(\Omega)}=\|f(t_{i+1}^n)-f(t)\|_{L^2(\Omega)}\leq \int_t^{t_{i+1}^n}\|\dot{f}(s)\|_{L^2(\Omega)}ds,
$$
with $t\in(t_i^n,t_{i+1}^n]$. 
Therefore $f_n(t)$ converges strongly in $L^2(\Omega)$ to $f(t)$ and
  $\left\|  f_n(t) \right\|_{L^2(\Omega)}$ converges to $\left\|  f(t) \right\|_{L^2(\Omega)}$
 for all $t\in(0,T)$.

With the following theorem we show convergence of this discrete in time (energetic) dynamics to a continuous damage dynamics. 
We will assume that given $(u_0,v_0)\in H^1_0(\Omega)\times L^2(\Omega)$ the set $D_0$ satisfy
\begin{equation}\label{thrzero}
D_0\supseteq \{ |\nabla u_0|\geq\lambda\}
\end{equation}
with $\lambda=\sqrt{\frac{2\alpha k}{\beta(\beta-\alpha)}}$. 
This condition is consistent with the threshold property that we will prove in Theorem \ref{EA}. It remains open to show that, under suitable continuity properties for the data, the dynamic of damage that we construct is continuous at zero (i.e.,   $A(0)=\sigma_{D_0}$).

\begin{theorem}\label{EA}
	Let $(u_n(t), D_n(t))$ be the sequence of piecewise constant in time  evolution as in (\ref{disc1}) and assume that $D_0$ satisfies \eqref{thrzero}.
	
	There exists $u\in W^{1,\infty}(0,T;L^2(\Omega))\cap L^{\infty}(0,T; H^1_0(\Omega))$, $\theta(t)\in L^{\infty}(\Omega;[0,1])$ and $A(t)\in L^{\infty}(\Omega;\mathcal{F}(\alpha,\beta))$ such that, up to subsequences, for all $t\in (0,T]$
	\begin{equation}\label{nume}
	u_{n}(t)\osd{H^1}u(t), \text{\qquad} \chi_{D_{n}(t)}\osd{*}\theta(t), \text{\qquad} \sigma_{D_{n}(t)}\os{G}A(t)
	\end{equation}
	with $\theta(t)$ increasing and $A(t)$ decreasing in time. At $t=0$ we have
	\begin{itemize}
		\item[i)] $A(0^+)\leq \sigma_{D_0}$ in the sense of quadratic forms;
		\item[ii)] $u(0^+)=u_0$ a.e. in $\Omega$;
		\item[iii)] $\partial_t u(0^+)= v_0$ a.e. in $\Omega$.
	\end{itemize}
	Moreover $(u_n(t),D_n(t))$ and the limit $(u(t),\theta(t), A(t))$ satisfy the following properties:
	\begin{itemize}
		\item \textbf{Euler-Lagrange equation}: 
		it holds
		\begin{equation}\label{weak-equation-limit}
	-\int_0^T\int_\Omega\partial_t{u}\partial_t{\phi}\,dx\,dt 
		+\int_0^T\int_\Omega A \nabla  u\nabla\phi\,dx\,dt=\int_0^T\langle f(t),\phi(t)\rangle\,dt
		\end{equation}
		for every $\phi\in L^2(0,T;H_0^1(\Omega))\cap W_0^{1,1}(0,T;L^2(\Omega))$;
		\item \textbf{Energy  inequality}: given
		$$E_{tot}(t):=\frac{1}{2}\normm{\partial_t{u}(t)}_{L^2}^2+\frac{1}{2}\int_{\Omega}A(t)\nabla u(t) \nabla u(t) dx+k\int_{\Omega}\theta(t) dx-\langle f(t),u(t)\rangle,$$ 
		it holds
		\begin{equation}\label{enbal}
		E_{tot}(t)\leq E_{tot}(0^+)-\int_{0}^{t}\langle \partial_t{f}(s),u(s)\rangle ds,
		\end{equation}

		\item \textbf{Threshold condition}: for each $\delta>0$ it holds
		\begin{equation}\label{thrdisc}
		\lim_{n\lra\infty}|\{x\notin D_n(t) : |\nabla u_n(t)|>\lambda+\delta\}|=0
		\end{equation}
		with $\displaystyle{\lambda:=\sqrt{\frac{2\alpha k}{\beta(\beta-\alpha)}}}$.
	\end{itemize}
\end{theorem}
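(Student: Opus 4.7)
First, I would use the Euler--Lagrange equation satisfied by the exact inner minimizer $u_{i+1}^n$ of \eqref{mind},
$$
-\mathrm{div}(\sigma_{D_{i+1}^n}\nabla u_{i+1}^n)+\frac{v_{i+1}^n-v_i^n}{\Delta t}=f(t_{i+1}^n),\qquad v_{i+1}^n:=\frac{u_{i+1}^n-u_i^n}{\Delta t}.
$$
Testing against $u_{i+1}^n-u_i^n$ and exploiting both monotonicity consequences of $D_{i+1}^n\supseteq D_i^n$ --- namely $\sigma_{D_{i+1}^n}\le \sigma_{D_i^n}$ as quadratic forms and $|D_{i+1}^n|\ge|D_i^n|$ --- produces a telescoping discrete energy inequality; a discrete Gronwall argument absorbing the work of $f$ then yields uniform bounds on $u_n$ in $W^{1,\infty}(0,T;L^2)\cap L^\infty(0,T;H^1_0)$, together with $\|\tilde u_n-u_n\|_{L^\infty(L^2)}=O(\Delta t)$. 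These give $u_n(t)\rightharpoonup u(t)$ in $H^1_0$ for every $t$. For the coefficients, irreversibility makes $D_n(\cdot)$ monotone increasing in $t$, so combining Helly's principle with the metrizability, compactness and ordering properties of $G$-convergence on $\calF(\alpha,\beta)$ (a diagonal extraction on a countable dense subset of $[0,T]$, then extension by monotonicity) furnishes a single subsequence along which $\chi_{D_n(t)}\stackrel{\ast}{\rightharpoonup}\theta(t)$ and $\sigma_{D_n(t)}\G A(t)$ for every $t\in[0,T]$, with $\theta$ increasing and $A$ decreasing.

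\textbf{Limit equation, energy inequality, initial data.} The assembled weak formulation
$$
-\int_0^T\!\!\int_\Omega\partial_t u_n\,\partial_t\phi\,dx\,dt+\int_0^T\!\!\int_\Omega\sigma_{D_n(t)}\nabla \tilde u_n(t)\nabla\phi\,dx\,dt=\int_0^T\langle f_n(t),\phi(t)\rangle\,dt
$$
passes to the limit term by term. The inertial term uses weak-$\ast$ convergence of $\partial_t u_n$; the flux term relies on the dynamic homogenization result of \cite{CCFM} for wave equations with monotone-in-time coefficients, yielding $\sigma_{D_n(t)}\nabla \tilde u_n(t)\rightharpoonup A(t)\nabla u(t)$ and hence \eqref{weak-equation-limit}. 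For \eqref{enbal} I would pass to the limit in the discrete telescoping estimate, using lower semicontinuity of the $L^2$ norm for the kinetic term, the convergence-of-energy property of $G$-convergence combined with strong $L^2$-convergence of $u_n$ (Aubin--Lions) for the elastic term, $\chi_{D_n(t)}\stackrel{\ast}{\rightharpoonup}\theta(t)$ for the damage term, and continuity for the force pairing. The identifications $u(0^+)=u_0$ and $\partial_t u(0^+)=v_0$ follow from the initialisation combined with the uniform bounds, and $A(0^+)\le \sigma_{D_0}$ follows from $\sigma_{D_n(t)}\le \sigma_{D_0}$ together with the ordering of $G$-limits.

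\textbf{Threshold and main obstacle.} Since $u_{i+1}^n$ is the exact minimizer of \eqref{mind}, the pair $(u_{i+1}^n,D_{i+1}^n)$ inherits the almost-minimality \eqref{incremental} with error $\tau_n^2/2^i$. Fix $\delta>0$ and set $E_i^n:=\{x\notin D_{i+1}^n:|\nabla u_{i+1}^n|>\lambda+\delta\}$. Following \cite{GL}, I would compete against a rank-one lamination supported on $E_i^n$, oriented along $\nabla u_{i+1}^n$, with optimal volume fraction $\theta^*$ attaining
$$
\min_{\theta\in[0,1]}\Bigl\{\tfrac{\alpha\beta\,s^2}{2(\theta\beta+(1-\theta)\alpha)}+k\theta\Bigr\},\qquad s=|\nabla u_{i+1}^n|>\lambda,
$$
whose value is strictly below $\tfrac12\beta s^2$ with quadratic gap $\ge c(s-\lambda)^2$. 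Realising this lamination by an admissible sequence $(u_{i+1}^n+\phi^\nu,D'_\nu)$, where $D'_\nu\supseteq D_{i+1}^n$, $\chi_{D'_\nu}\stackrel{\ast}{\rightharpoonup}\chi_{D_{i+1}^n}+\theta^*\chi_{E_i^n}$ and correctors $\phi^\nu\to 0$ in $L^2$ fast enough to preserve the kinetic penalty while the elastic energy converges to the relaxed envelope, inserting it in \eqref{incremental} and sending $\nu\to\infty$ yields $c\delta^2|E_i^n|\le \tau_n^2/2^i$; hence $|E_i^n|\to 0$ for every $t>0$, and the transfer from $\tilde u_n$ to $u_n$ uses the convex-combination structure of $\nabla u_n(t)$ between $\nabla u_i^n$ and $\nabla u_{i+1}^n$. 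The hardest step is the flux limit $\sigma_{D_n(t)}\nabla \tilde u_n(t)\rightharpoonup A(t)\nabla u(t)$ in a hyperbolic setting, since $G$-convergence is an elliptic notion; the irreversibility-enforced monotonicity in $t$ is exactly what enables \cite{CCFM} to apply. A secondary difficulty is constructing set-valued competitors realising the optimal lamination energy while preserving the kinetic penalty, which is what produces the sharp $\lambda=\sqrt{2\alpha k/(\beta(\beta-\alpha))}$ instead of the crude $\sqrt{2k/(\beta-\alpha)}$ coming from a naive pointwise damage comparison.
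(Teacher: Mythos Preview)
Your overall architecture matches the paper's, and the compactness and threshold parts are essentially as written there. But two steps are glossed over in a way that hides real work.

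\textbf{Flux identification.} You defer $\sigma_{D_n(t)}\nabla\tilde u_n(t)\rightharpoonup A(t)\nabla u(t)$ to a black-box citation of \cite{CCFM}. That reference treats \emph{continuous} wave equations with given $G$-convergent coefficients; here $u_n$ are piecewise-affine interpolants of a discrete scheme, so the result does not apply verbatim. The paper instead argues directly: for a continuity point $\tau$ of $\Theta(t)=\int_\Omega\theta(t)\,dx$, it averages the discrete equation over $(\tau-h,\tau)$, introduces the auxiliary elliptic solution $\hat u_n$ of $-\mathrm{div}(\sigma_{D_n(\tau)}\nabla\hat u_n)=-\mathrm{div}(A(\tau)\nabla\bar u)$, and proves $\|\nabla(\hat u_n-\bar u_n)\|_{L^2}^2\le C(\Theta(\tau)-\Theta(\tau-h))+o(1)$. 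The ingredient you do not mention is the \emph{weak} threshold of Remark~1: almost-minimality in $D$ forces $\int_{D_n(\tau)\setminus D_n(t)}|\nabla\tilde u_n|^2\le M^2|D_n(\tau)\setminus D_n(t)|+o(1)$, and this is precisely what controls the drift of coefficients on $(\tau-h,\tau)$. Monotonicity of $t\mapsto D_n(t)$ alone is not sufficient; this quantitative control coming from minimality in $D$ is what makes the argument close.

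\textbf{Energy inequality.} Here there is a genuine gap. Testing the Euler--Lagrange equation against $u_{i+1}^n-u_i^n$ uses only minimality in $u$ with $D_{i+1}^n$ fixed, so the resulting identity contains the elastic and kinetic energies but \emph{not} the dissipation $k|D_{i+1}^n|$. If you try to insert it by hand, the monotonicity $|D_{i+1}^n|\ge|D_i^n|$ has the wrong sign: you would need $(\beta-\alpha)\int_{D_{i+1}^n\setminus D_i^n}|\nabla u_i^n|^2\ge 2k|D_{i+1}^n\setminus D_i^n|$, which is not available since $D_{i+1}^n$ was selected against $\nabla u_{i+1}^n$, not $\nabla u_i^n$. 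The paper therefore does not pass to the limit in your telescoping estimate; instead it follows \cite{DSc} and uses the full almost-minimality \eqref{incremental} with competitors $D=D_{i+1}^n\setminus E_\delta$, $|E_\delta|=\delta(|D_{i+1}^n|-|D_i^n|)$, and $u=u_{i+1}^n-\delta(u_{i+1}^n-u_i^n)$, then divides by $\delta$ and sends $\delta=\delta_n\to0$ with $\tau_n^2\ll\delta_n\ll\tau_n$. This variational-inequality trick is what couples the elastic decrease to the damage growth and produces a telescoping estimate that legitimately carries $k|D_i^n|$.
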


The rest of the paper is devoted to the proof of this result. The compactness results are consequences of Lemmas \ref{lemma1} and \ref{conv-G}, the Euler-Lagrange equation and the Energy  inequality will be derived in Section \ref{sec-var-pb}, while the Threshold condition in obtained in Section \ref{sec-threshold}.

\section{Apriori estimates and compactness}\label{sec-comp}

For every $n$ and $t\in(t_i^n,t_{i+1}^n]$ we define the following auxiliary function
\begin{equation}\label{venne}
v_n(t)=\frac{u_{i+1}^n-u_i^n}{\Delta t}+\frac{(t-t_{i+1}^n)}{\Delta t}\left(\frac{u_{i}^n-u_{i-1}^n}{\Delta t}- \frac{ u_{i+1}^n-u_i^n}{\Delta t}\right)\,.
\end{equation}

By the definition of the sequences $u_n$ and $v_n$ and the minimality of $u_n^i$ we deduce that for all $t\in(0,T)$ we have
\begin{equation}\label{E-L-discrete}
\int_\Omega\dot{v}_n(t)\phi\,dx +\int_\Omega\sigma_{D_n(t)} \nabla \tilde u_n(t)\nabla\phi\,dx=\langle f_n(t),\phi\rangle
\end{equation}
for every $\phi\in H^1_0(\Omega)$.

Arguing similarly to what is done in \cite{DM-La} here we deduce the apriori estimates that are needed to pass to the limit in the discrete scheme.

\begin{lemma}\label{lemma1}
	Let $u_n$, $\tilde u_n$, $D_n$, and $v_n$ be defined as in \eqref{disc1} and \eqref{venne}. Then there exists $C>0$ such that for all $t\in (t^n_{i},t^n_{i+1}]$
\begin{equation}\label{f-apriori}
\|\dot{u}_n(t)\|_{L^2}^2+\|\nabla u_n(t_n^{i+1})\|_{L^2}^2+\Delta t\int_0^{t_n^{i+1}}\!\!\!\!\!(\|\dot{v}_n(t)\|_{L^2}^2  +\|\nabla\dot{u}_n(t)\|_{L^2}^2) \,dt\leq C\,
\end{equation}
for each $i=0,\ldots ,\lfloor T/\Delta t \rfloor$ and all $n\in\nat$.
	Moreover we have that
	\begin{enumerate}
		\item[i)] $u_n$ is bounded in $W^{1,\infty}(0,T; L^2(\Omega))\cap L^{\infty}(0,T; H^1_0(\Omega))$;
		\item[ii)] $\tilde u_n$ is bounded in $L^\infty(0,T; H^1_0(\Omega))$;
		\item[iii)] $v_n$ is bounded in $W^{1,\infty}(0,T; L^2(\Omega))$.
			\end{enumerate}
		Finally there exists a subsequence of indices (still denoted by $n$) and a function $u\in H^2(0,T; L^2(\Omega))$ such that
		$$
u_n\to u \quad \hbox{in} \ H^1(0,T; L^2(\Omega))
$$
and
\begin{equation}\label{conv-punt}
 v_n(t)\weakly \dot{u}(t) \quad \hbox{in} \ L^2(\Omega),\ \hbox{for } \ t\in[0,T]
\end{equation}	
	with $u(0^+)=u_0$ and $\dot{u}(0^+)=v_0$.

\end{lemma}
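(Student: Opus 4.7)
My plan is to derive a discrete energy estimate by testing the discrete Euler--Lagrange equation \eqref{E-L-discrete} against the displacement increment $u^n_{i+1}-u^n_i=\Delta t\,v^n_{i+1}$, and then to sum in $i$ using a discrete Gronwall argument. The decisive structural input is the irreversibility of damage $D^n_{i+1}\supseteq D^n_i$ (imposed by the unilateral constraint in \eqref{incremental}), which translates into the monotonicity of elastic coefficients $\sigma_{D^n_{i+1}}\leq \sigma_{D^n_i}$.

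\textbf{Discrete energy inequality.} Setting $v^n_{i+1}:=(u^n_{i+1}-u^n_i)/\Delta t$, the polarization identity $(a-b)a=\tfrac12(a^2-b^2+(a-b)^2)$ applied to the inertial and elastic contributions gives
\begin{equation*}
\begin{split}
&\tfrac12\bigl(\|v^n_{i+1}\|_{L^2}^2-\|v^n_i\|_{L^2}^2+\|v^n_{i+1}-v^n_i\|_{L^2}^2\bigr)\\
&\quad+\tfrac12\int_\Omega \sigma_{D^n_{i+1}}\bigl(|\nabla u^n_{i+1}|^2-|\nabla u^n_i|^2+|\nabla(u^n_{i+1}-u^n_i)|^2\bigr)\,dx=\langle f^n_{i+1},u^n_{i+1}-u^n_i\rangle.
\end{split}
\end{equation*}
Replacing $\sigma_{D^n_{i+1}}$ by the larger $\sigma_{D^n_i}$ in the unfavorable term $-\sigma_{D^n_{i+1}}|\nabla u^n_i|^2$ creates a telescoping structure for the elastic energy. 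Summing from $i=0$ to $j$, bounding $\sum \Delta t\,\langle f^n_{i+1},v^n_{i+1}\rangle$ by Young's inequality and \eqref{f-fn}, and applying a discrete Gronwall lemma yields \eqref{f-apriori}, after identifying $\sum_i\|v^n_{i+1}-v^n_i\|^2=\Delta t\int_0^{t_{j+1}^n}\|\dot v_n\|^2\,dt$ and the analogous formula for $\nabla\dot u_n$.

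\textbf{Compactness and pointwise convergence.} The uniform bounds on the nodal values $\|v^n_i\|_{L^2}$ and $\|\nabla u^n_i\|_{L^2}$ directly yield (i), (ii) and the $L^\infty(0,T;L^2)$ part of (iii); the additional time regularity needed for (iii) comes from testing \eqref{E-L-discrete} against $\phi\in H^1_0(\Omega)$, which gives $\|\dot v_n(t)\|_{H^{-1}}\leq \|f_n(t)\|_{L^2}+\beta\|\nabla\tilde u_n(t)\|_{L^2}\leq C$. Weak-$*$ extraction combined with Aubin--Lions produces $u_n\to u$ in $H^1(0,T;L^2)$ with the claimed regularity for $u$. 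Ascoli applied to $\{v_n\}\subset C([0,T];H^{-1})$, combined with the $L^\infty(L^2)$ bound, upgrades convergence to the pointwise weak-$L^2$ statement $v_n(t)\rightharpoonup \dot u(t)$ in \eqref{conv-punt}; the identification of the limit with $\dot u$ uses that the piecewise affine $u_n$ and piecewise constant $\tilde u_n$ (as well as $v_n$ versus the piecewise constant $\dot u_n$) differ by $O(\Delta t)$ in the relevant norm. The identity $u(0^+)=u_0$ is inherited from $u_n(0)=u_0$ via the embedding $H^1(0,T;L^2)\hookrightarrow C([0,T];L^2)$, while $\dot u(0^+)=v_0$ follows by observing that $v_n(0)=2v_1^n-v_0$ and that the first Euler--Lagrange equation gives $\|v_1^n-v_0\|_{H^{-1}}=O(\Delta t)$, which together with the uniform $L^2$ bound on $v_1^n$ forces $v_1^n\rightharpoonup v_0$ weakly in $L^2(\Omega)$.

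\textbf{Main obstacle.} The crucial and delicate step is the telescoping of the elastic energy at consecutive discrete times: $u^n_{i+1}$ is equilibrated against the updated coefficient $\sigma_{D^n_{i+1}}$, whereas the previous step lived with $\sigma_{D^n_i}$, and these two coefficients are a priori not directly comparable inside the energy balance. The monotonicity $\sigma_{D^n_{i+1}}\leq\sigma_{D^n_i}$ coming from the damage constraint is precisely what compensates for this mismatch and allows a clean telescoping. This is the same mechanism that underlies the well-posedness of the wave equation with the discontinuous but time-monotone coefficient $A(t,x)$ alluded to in the introduction, and without the irreversibility the discrete energy method would not produce a uniform bound.
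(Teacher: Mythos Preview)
Your derivation of the discrete energy estimate \eqref{f-apriori} and of items (i)--(ii) follows essentially the same route as the paper: test the discrete Euler--Lagrange equation with the increment $u^n_{j+1}-u^n_j$, apply the polarization identity to both the kinetic and the elastic term, telescope the elastic energy using the monotonicity $\sigma_{D^n_{j+1}}\le\sigma_{D^n_j}$, and close by absorbing the forcing term (your ``discrete Gronwall'' is equivalent to the paper's ``take the max over $i$ and absorb $\tfrac12\max_t\|\dot u_n\|^2$''). Your identification of the irreversibility constraint as the structural ingredient that makes the telescoping work is exactly right, and is the point the paper stresses as well.

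The substantive discrepancy concerns item (iii) and its downstream consequences. You only establish $\dot v_n\in L^\infty(0,T;H^{-1}(\Omega))$ by duality from the equation, whereas the lemma asserts the $L^2$ bound $v_n\in W^{1,\infty}(0,T;L^2(\Omega))$. The paper's argument for (iii) is different from yours: it returns to the \emph{single-step} energy identity (the one you wrote before summing) and observes that, once (i) and (ii) are in hand, every term in that identity other than $\|v^n_{j+1}-v^n_j\|_{L^2}^2$ is already controlled uniformly in $j$ and $n$; reading off this term then gives a uniform $L^2$ bound on the discrete acceleration. You do not exploit the single-step identity in this way and therefore do not prove (iii) as stated. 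This gap propagates: the paper obtains $u\in H^2(0,T;L^2(\Omega))$ and the strong $H^1(0,T;L^2)$ convergence of $u_n$ from weak compactness of $v_n$ in $H^1(0,T;L^2)$ (together with the identification $v=\dot u$ via \eqref{est1}). With only the $H^{-1}$ bound on $\dot v_n$, your Aubin--Lions/Ascoli argument yields at best $v_n\to\dot u$ in $C([0,T];H^{-1})$, hence $\ddot u\in L^\infty(0,T;H^{-1})$ but not the claimed $u\in H^2(0,T;L^2)$; and the strong convergence $u_n\to u$ in $H^1(0,T;L^2)$ you assert cannot come from Aubin--Lions applied to $v_n$, since the relevant embedding $L^2\hookrightarrow L^2$ is not compact. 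The fix is to add the paper's step: go back to the unsummed identity and isolate $\|v^n_{j+1}-v^n_j\|_{L^2}^2$.
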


\begin{proof}

	We start proving that
	for each $i=0,\ldots,\lfloor T/\Delta t \rfloor$ it holds
	\begin{equation}\label{ste}
	\begin{split}
	& \normm{\dot{u}_n(t)}_{L^2}^2+\int_{\Omega}\sigma_{D_n(t)}|\nabla{u_n(t^n_{i+1})}|^{2}dx+\Delta t\int_{0}^{t^n_{i+1}}\normm{\dot{v}_n(s)}_{L^2}^2ds\\
	&+\Delta t\int_{0}^{t^n_{i+1}}\int_{\Omega}\sigma_{D_n(t)}|\nabla \dot{u}_n(s)|^2dxds \\ 
	&= 2\int_{0}^{t^n_{i+1}}\langle f_{n}(s),\dot{u}_n(s)\rangle ds+\int_{\Omega}\sigma_{D_n(0)}|\nabla u_n(0)|^{2}dx+\normm{\frac{u_1^n-u_0}{\Delta t}}_{L^2}^{2}\\
	&-(\beta-\alpha)\sum_{j=0}^{i}\int_{D^n_{j+1}\backslash D^{n}_{j}}|\nabla u_n(t^{n}_{j})|^{2}dx
	\end{split}
	\end{equation}
	for $t\in (t^n_{i},t^n_{i+1}]$.
	
	Since $u^n_{j+1}$ is the minimum point for the functional in (\ref{mind}) it satisfies the following weak Euler-Lagrange equation
	\begin{equation}
	\int_{\Omega}\sigma_{D^n_{j+1}}\nabla u^n_{j+1}\nabla \varphi dx+\int_{\Omega}\bigg(\frac{u_{j+1}^n-2u_{j}^n + u_{j-1}^n}{\Delta t}\bigg)\frac{\varphi}{\Delta t}dx-\langle f(t^{n}_{j+1}),\varphi \rangle =0
	\end{equation}
	for each $\varphi\in H^{1}_0(\Omega)$. 
	Choosing $\varphi=u_{j+1}^n-u_{j}^n$ we have
	
	\begin{equation}
	\begin{split}
	& \int_{\Omega}\sigma_{D^n_{j+1}}|\nabla u^n_{j+1}|^{2}-\int_{\Omega}\sigma_{D^n_{j+1}}\nabla u^n_{j+1}\nabla u^n_{j}dx+\int_{\Omega}\bigg|\frac{u_{j+1}^n-u_{j}^n}{\Delta t}\bigg|^{2}dx \cr & -\int_{\Omega}\frac{u_{j+1}^n-u_{j}^n}{\Delta t}\frac{u_{j}^n-u_{j-1}^n}{\Delta t}dx-\langle f(t^{n}_{j+1}),(u_{{j+1}}^n-u_{j}^n) \rangle=0.
	\end{split}
	\end{equation}
	We then use the identity $\displaystyle{\frac{1}{2}\normm{g}_{L^2}^2-\int_{\Omega}g\cdot hdx=\frac{1}{2}\normm{g-h}^{2}_{L^2}-\frac{1}{2}\normm{h}^{2}_{L^2}}$ for the first two terms on the left hand side  with $\displaystyle{ g=\nabla u^{n}_{j+1}\sqrt{\sigma_{D^n_{j+1}}}}$ and $\displaystyle{h=\displaystyle{\nabla u^{n}_{j}\sqrt{\sigma_{D^n_{j+1}}}}}$, and $\displaystyle{g=\frac{u^n_{j+1}-u^n_j}{\Delta t}}$ and $\displaystyle{h=\frac{u^n_{j}-u^n_{j-1}}{\Delta t}}$ for the second two terms, and we obtain
	
	\begin{equation}\label{f-identity}
	\begin{split}
	& \left\|\frac{u^n_{j+1}-u^n_{j}}{\Delta t} \right\|_{L^2}^{2}+\left\|\frac{u^n_{j+1}-u^n_{j}}{\Delta t}-\frac{u^n_{j}-u^n_{j-1}}{\Delta t} \right\|_{L^2}^{2} \cr & +\int_{\Omega}\sigma_{D^n_{j+1}}|\nabla u^n_{j+1}|^{2}dx  +\int_{\Omega}\sigma_{D^n_{j+1}}|\nabla u_{{j+1}}^n-\nabla u^n_{j}|^2dx \cr &
	=2\langle f(t^{n}_{j+1}),u^n_{{j+1}}-u^n_{j}\rangle +\int_{\Omega}\sigma_{D^n_{j+1}}|\nabla u^{n}_{j}|^2 dx+\left\|\frac{u^{n}_{j}-u^{n}_{j-1}}{\Delta t}\right\|_{L^2}^2.
	\end{split}
	\end{equation}
	Summing over $j=0,..,i$  and using the identity $\sigma_{D_{j+1}^n}=\sigma_{D_{j}^n}-(\beta-\alpha)\chi_{D^n_{j+1}\backslash D^n_{j}}$  we have for $t\in (t^n_{i},t^n_{i+1}]$ \\ 
	\begin{equation*} 
	\begin{split}
	& \left\|\frac{u^n_{i+1}-u^n_i}{\Delta t} \right\|_{L^2}^{2}+\sum_{j=0}^i\normm{\Delta t \dot{v}_n(t^n_{j+1})}_{L^2}^{2}+  \int_{\Omega}\sigma_{D^n_{i+1}}|\nabla u^n_{i+1}|^{2}dx\\
	&+ \sum_{j=0}^i\int_{\Omega}\sigma_{D^n_{j+1}}|\nabla u_{{j+1}}^n-\nabla u^n_{j}|^2dx \cr 
	&
	=2\sum_{j=0}^i\langle f(t^{n}_{j+1}),(u^n_{j+1}-u^n_{j})\rangle +\int_{\Omega}\sigma_{D_n(0)}|\nabla u_n(0)|^2 dx+\left\|\frac{u^n_{1}-u_0}{\Delta t}\right\|_{L^2}^2\\
	&-(\beta-\alpha)\sum_{j=1}^{i}\int_{D^n_{j+1}\backslash D^n_{j}}|\nabla u^n_{j}|^{2}dx.
	\end{split}
	\end{equation*}
	From this it follows immediately (\ref{ste}), using the definitions in (\ref{disc1}), and  that  $\dot{u}_n(t)=\frac{u^n_{j+1}-u^n_j}{\Delta t}$. 
	Now from \eqref{ste} we deduce for each $i$
	\begin{equation}\label{f-mn}
	\begin{split}
&\sup_{t\in (t^n_{i},t^n_{i+1}]}		\|\dot{u}_n(t)\|_{L^2}^2+\alpha\|\nabla u_n(t^n_{i+1})\|_{L^2}^2+\Delta t\int_0^{t^n_{i+1}}\|\dot{v}_n(t)\|_{L^2}^2 dt\\ &\qquad +\alpha\Delta t \int_0^{t^n_{i+1}}\|\nabla\dot{u}_n(t)\|_{L^2}^2 dt \\ &\leq \beta\normm{\nabla u_0}_{L^2}+\normm{\frac{u^n_{1}-u_0}{\Delta t}}_{L^2}^{2}+2\normm{f_n}_{L^{2}(0,T;{L^2}(\Omega))}T^{1/2}\max_{t\in[0,T]}\normm{\dot{u}_n(t)}_{L^{2}}\\
&\leq \beta\normm{\nabla u_0}_{L^2}+\normm{\frac{u^n_{1}-u_0}{\Delta t}}_{L^2}^{2}+C\normm{f}^2_{L^{2}(0,T;{L^2}(\Omega))}T+ \frac12 \max_{t\in[0,T]}\normm{\dot{u}_n(t)}^2_{L^{2}}.
	\end{split}
	\end{equation}
Since (\ref{f-mn}) holds for every $i=0,\ldots, \lfloor T/\Delta t \rfloor$, we immediately have that 
	\begin{equation}\label{MN0}
\frac12\max_{t\in [0,T]}		\|\dot{u}_n(t)\|_{L^2}^2\leq\beta\normm{\nabla{u}_0}^2_{L^2(\Omega)}+\normm{\frac{u^n_{1}-u_0}{\Delta t}}^2_{L^2(\Omega)}+C\normm{f}^2_{L^{2}(0,T;{L^2}(\Omega))}T
	\end{equation}
Now by the definition of $u_1^n$ we have that
$$
k|D_1|+\frac{1}{2} \left \| \frac{u_1^n-u_{0}}{\Delta t}- v_0   \right\|^{2}_{L^2}\leq	E(t^n_1, u_0, D_0)  +\frac{1}{2} \left \|  v_0   \right\|^{2}_{L^2}
$$
and then
\begin{equation}\label{f-20-0}
\frac{1}{2} \left \| \frac{u_1^n-u_{0}}{\Delta t}- v_0   \right\|^{2}_{L^2}\leq	\beta \int_\Omega |\nabla u_0|^2dx +\langle f(t_1^n), u_0\rangle +\frac{1}{2} \left \|  v_0   \right\|^{2}_{L^2}\leq C
\end{equation}
which implies 
\begin{equation}\label{f-20}
\left \| \frac{u_1^n-u_{0}}{\Delta t}   \right\|^{2}_{L^2}\leq C
\end{equation}
Combining \eqref{f-20} and \eqref{f-mn}
we obtain \eqref{f-apriori}.

	From \eqref{f-apriori} we immediately obtain  (i) and (ii).
	
	Now from \eqref{f-identity} and using (i) and (ii) we obtain for every $t\in(t_i^n, t_{i+1}^n]$
	$$
\left\|\dot{v}_n(t) \right\|_{L^2}^{2}=\left\|\frac{u^n_{j+1}-u^n_{j}}{\Delta t}-\frac{u^n_{j}-u^n_{j-1}}{\Delta t} \right\|_{L^2}^{2}\leq C
	$$
	which gives (iii).

By the bounds (i), (ii), and (iii), we can then conclude that, up to a subsequence, $u_n\weakly u$ in $H^1(0,T; L^2(\Omega))$ and $v_n\weakly v$ in $H^1(0,T; L^2(\Omega))$. We also have that $\dot{u}(t)=v(t)$ in $L^2(\Omega)$ for all $t\in[0,T]$. Indeed if $t\in (t_{i}^n, t_{i+1}^n]$ it holds
\begin{equation}\label{est1}
	\normm{\dot{u}_n(t)-v_n(t)}_{L^2}=\normm{v_n(t^{n}_{i+1})-v_n(t)}_{L^2}\leq\int_{t^n_i}^{t^n_{i+1}}\normm{\dot{v}_n(s)}_{L^2}ds\leq c\tau_n,
\end{equation}
which goes to zero when $n\lra \infty$. As a consequence of \eqref{est1} we also obtain \eqref{conv-punt}.
Using the convergence of the $v_n$ we also deduce that $u_n$ converges strongly to $u$ in $H^1(0,T; L^2(\Omega))$ and that $u\in H^2(0,T; L^2(\Omega))$. From this, and the fact that $u_n(0)=u_0$, we easily deduce that $u(0^+)=u_0$.

It remains to show that $\dot{u}(0^+)=v_0$.  We first show that up to a subsequence
\begin{equation}\label{f-v0-1}
v_n(\tau_n)=\frac{u_1^n-u_{0}}{\tau_n}\weakly v_0 \quad \hbox{in } L^2(\Omega)
\end{equation}
(we recall that $\Delta t=\tau_n$).
Indeed  from \eqref{f-20-0} we deduce that $\frac{u_1^n-u_{0}}{\tau_n}$  converges weakly in $L^2(\Omega)$. The fact that its limit is $v_0$ is a consequence of the Euler Lagrange equation for $u_1^n$ which gives
$$
\left|\int_\Omega \bigl(\frac{u_1^n-u_{0}}{\tau_n}-v_0\bigr)\varphi dx\right|=\tau_n \left| \langle f(\tau_n),\varphi\rangle- \int_\Omega \sigma_{D_1^n}\nabla u_1^n\nabla \varphi dx \right|\leq C \tau_n \|\varphi\|_{H^1}
$$
for all $\varphi\in H^1_0(\Omega)$. 

Since $u\in H^2(0,T; L^2(\Omega))$ we have that $\dot{u}(t)\to \dot{u}(0^+)$ in $L^2(\Omega)$ as $t$ tends to $0$. Moreover, as in \eqref{est1}, we deduce that
\begin{equation}\label{f-v0-2}
\normm{v_n(\tau_n)-v_n(t)}_{L^2}\leq c|t-\tau_n|.
\end{equation}
This combined with \eqref{est1}, the convergence of the $u_n$, and \eqref{f-v0-1}, concludes that $\dot{u}(0^+)=v_0$ a.e. in $\Omega$.

\end{proof}

\begin{lemma}\label{conv-G}
There exists a subsequence, still indexed by $n$, such that for all $t\in(0,T]$ we have
\begin{equation}\label{conv-A-theta}
\chi_{D_{n}(t)}\osd{*}\theta(t), \text{\qquad} \sigma_{D_{n}(t)}\os{G}A(t)
\end{equation}
with $\theta(t)\in L^\infty(0,T, [0,1])$ increasing and $A(t)\in G_{\theta(t)}(\alpha,\beta)$ decreasing in time. Moreover $A(0^+)\leq \sigma_{D_0}$ in the sense of quadratic forms.
\end{lemma}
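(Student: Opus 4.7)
First, I will record the monotonicity built into the scheme: by construction, the incremental problems \eqref{incremental0}--\eqref{incremental} are constrained to $D\supseteq D_i^n$, which forces $D_1^n\subseteq D_2^n\subseteq\cdots$. Consequently $\chi_{D_n(t)}$ is nondecreasing in $t$ and $\sigma_{D_n(t)}=\alpha\chi_{D_n(t)}+\beta(1-\chi_{D_n(t)})$ is nonincreasing in $t$ as quadratic forms; the ordering property of $G$-convergence (item 4 in Section \ref{sec-var-pb}) will transfer this monotonicity to any $G$-limit.

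Next I extract at a countable dense set. Let $Q=\{q_k\}_{k\in\mathbb{N}}\subset(0,T]$ be countable and dense, chosen to contain a sequence $q_{k_j}\downarrow 0$. By weak-$*$ sequential compactness of $\chi_{D_n(q_k)}$ in $L^\infty(\Omega;[0,1])$, together with the $G$-compactness and metrizability (items 1 and 3) of $\sigma_{D_n(q_k)}$ in $L^\infty(\Omega;\calF(\alpha,\beta))$, a diagonal extraction produces a subsequence, still indexed by $n$, such that for every $q_k\in Q$,
\[
\chi_{D_n(q_k)}\stackrel{*}{\rightharpoonup}\theta(q_k),\qquad \sigma_{D_n(q_k)}\stackrel{G}{\to}A(q_k).
\]
The ordering property combined with the Dal Maso--Kohn localization \eqref{g-gper} then gives $\theta(q_j)\leq\theta(q_k)$, $A(q_j)\geq A(q_k)$ for $q_j\leq q_k$, and $A(q_k)\in\calG_{\theta(q_k)}(\alpha,\beta)$.

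The hard part is promoting this to convergence for every $t\in(0,T]$, as required by the statement. The crucial input is that $G$-convergence is metrizable on $L^\infty(\Omega;\calF(\alpha,\beta))$ with that space being $G$-compact (items 1 and 3), and that weak-$*$ convergence is metrizable on the norm-bounded set $\{\chi_{D_n(t)}\}_n$ since $L^1(\Omega)$ is separable. I would then apply a Helly-type selection for the monotone-in-$t$ sequences $t\mapsto\chi_{D_n(t)}$ and $t\mapsto\sigma_{D_n(t)}$, valued in these compact metric spaces, combined with a further diagonal extraction over the at most countable set of jump times of the pointwise-defined monotone limits (countability here follows from separability of $L^1(\Omega)$ and the countable base of the $G$-metric, applied to scalar monotone functions $t\mapsto\int_\Omega\theta(t)\phi$ and analogous test pairings for $A(t)$). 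This yields a single subsequence along which \eqref{conv-A-theta} holds at every $t\in(0,T]$, the limits inherit monotonicity in $t$, and \eqref{g-gper} gives $A(t)\in\calG_{\theta(t)}(\alpha,\beta)$ pointwise.

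Finally, for the behaviour at $t=0^+$: since $D_n(t)\supseteq D_n(0)=D_0$ for every $t>0$ and every $n$, we have $\sigma_{D_n(t)}\leq\sigma_{D_0}$ as quadratic forms. Passing to the $G$-limit via the ordering property yields $A(t)\leq\sigma_{D_0}$ for every $t>0$, and since $A(\cdot)$ is nonincreasing in $t$ with values in the $G$-compact set $L^\infty(\Omega;\calF(\alpha,\beta))$, the limit $A(0^+)$ exists in the $G$-topology and satisfies $A(0^+)\leq\sigma_{D_0}$ in the sense of quadratic forms, which completes the plan.
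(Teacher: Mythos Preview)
Your proposal is correct and follows essentially the same route as the paper: monotonicity of $t\mapsto D_n(t)$, compactness of weak-$*$ and $G$-convergence, metrizability, and a Helly-type selection to obtain a $t$-independent subsequence, together with the ordering property for $A(0^+)\leq\sigma_{D_0}$. You spell out more of the Helly argument (diagonal extraction on a dense set, then a further extraction at the countable set of jump times) than the paper, which simply cites the argument from \cite{FG}, but the strategy is the same.
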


\begin{proof}
Compactness at each time $t$ is guaranteed by the compactness of the weak start topology in $L^\infty$ and the compactness of the $G$-convergence. The key point is the possibility of extracting a sequence that does not depend on $t$.
This a standard argument in this context of discrete in time approximation schemes, and it  is a direct consequence of the monotonicity of the sequences $\theta_n$ and $\sigma_{D_n}$ with respect to time, combined with the metrizability of the $G$-convergence and an application of the Helly's theorem for monotone sequence (see \cite{FG}, Theorem 2 and Remark 3).

The condition $A(0^+)\leq \sigma_{D_0}$ is a direct consequence of property (4), ordering, of the $G$-convergence (together with the monotonicity of $A(t)$ for the definition of $A(0^+)$).
\end{proof}

\begin{remark}
Note that by the approximate minimality of $D_n(t)$ we deduce a first threshold condition of the following form:
Given $M>\sqrt{\frac{2k}{\beta-\alpha}}$ we have
\begin{equation}\label{weak-threshold}
\liminf_{n\to+\infty}\int_{\{ |\nabla \tilde u_n(t)|>M\}\setminus D_n(t)}|\nabla \tilde u_n|^2\, dx=0.
\end{equation}
This, in particular, implies that
$$
\liminf_{n\to +\infty}|\{x\in \Omega\setminus D_n(t)\,:\ |\nabla \tilde u_n(t)|>M\}| =0\,.
$$
It easy to prove \eqref{weak-threshold} by contradiction. Assume that there exists $\delta>0$ such that
$$
\liminf_{n\to+\infty}\int_{\{ |\nabla \tilde u_n(t)|>M\}\setminus D_n(t)}|\nabla \tilde u_n|^2\, dx>\delta\,.
$$
We then add for every $n$ the set $E=\{ |\nabla \tilde u_n(t)|>M\}$ to the damage set $D_n(t)$ and we obtain a reduction of energy given by
$$
E(t_{i+1}^n, \tilde u_n(t),D_n(t))-E(t_{i+1}^n, \tilde u_n(t), D_n(t)\cup E)= \frac12(\beta-\alpha)\int_E |\nabla \tilde u_n(t)|^2dx -k|E|
$$
Now there are two possibilities: either $|\{ |\nabla \tilde u_n(t)|>M\}|\to 0$ or $|\{ |\nabla \tilde u_n(t)|>M\}|>\eta>0$ for $n$ large enough. In these two cases we get either
$$
\liminf_{n\to+\infty}E(t_{i+1}^n,\tilde u_n(t),D_n(t))-E(t_{i+1}^n, \tilde u_n(t), D_n(t)\cup E)\geq \frac12(\beta-\alpha)\delta>0
$$
or
$$
\liminf_{n\to+\infty}E(t_{i+1}^n, \tilde u_n(t),D_n(t))-E(t_{i+1}^n, \tilde u_n(t), D_n(t)\cup E)\geq[\frac12(\beta-\alpha)M^2 -k]\eta>0
$$
and both contradict the minimality \eqref{incremental}.

Property \eqref{weak-threshold} will be used in the derivation of the energy inequality below.
Note that this is not yet the threshold conditions of Theorem \ref{EA} since the constant $\sqrt{\frac{2k}{\beta-\alpha}}$ is higher than $\lambda$.
\end{remark}

\section{The characterization of the limit problem and the energy inequality}\label{sec-EL}

We prove in this section that the limit $u$ of the discrete in time scheme is a weak solution of the equation
\begin{equation}\label{equation-limit}
\ddot{u} - {\rm div}(A(t)\nabla u)=f\qquad \hbox{in} \ \Omega\,
\end{equation}
where $A(t)$ is the $G$-limit of $\sigma_{D_n(t)}$. 

Precisely we show the following.

\begin{proposition}
	Let $u$ be the weak limit, up to a subsequence, of $u_n$ as defined in \eqref{disc1}, and let $A(t)$ be the corresponding $G$-limit of $\sigma_{D_n(t)}$, then
\begin{equation}\label{weak-equation-limit}
-\int_0^T\int_\Omega\dot{u}\dot{\phi}\,dx\,dt +\int_0^T\int_\Omega A \nabla \tilde u\nabla\phi\,dx\,dt=\int_0^T\langle f,\phi\rangle\,dt
\end{equation}
for every $\phi\in H^1_0(0,T;H_0^1(\Omega))$, 
\end{proposition}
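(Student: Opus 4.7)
The plan is to multiply the discrete Euler--Lagrange equation \eqref{E-L-discrete} by a test function $\phi\in H^1_0(0,T;H^1_0(\Omega))$, integrate in $t\in(0,T)$, and transfer one time derivative onto $\phi$. Using $\phi(0)=\phi(T)=0$ and $\partial_t v_n=\dot v_n$, this produces the $n$-level identity
$$-\int_0^T\!\!\int_\Omega v_n\,\partial_t\phi\,dx\,dt+\int_0^T\!\!\int_\Omega\sigma_{D_n(t)}\nabla\tilde u_n\cdot\nabla\phi\,dx\,dt=\int_0^T\!\langle f_n(t),\phi(t)\rangle\,dt.$$
By Lemma \ref{lemma1} we have $v_n\weakly\dot u$ in $L^2(0,T;L^2(\Omega))$, and by \eqref{f-fn} together with the pointwise strong convergence $f_n(t)\to f(t)$ in $L^2(\Omega)$, the forcing converges strongly in $L^2(0,T;L^2(\Omega))$. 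The first and the third terms therefore pass to the desired limits, and the whole problem reduces to passing to the limit in the elliptic bilinear form.

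By density it suffices to take $\phi(t,x)=\eta(t)\psi(x)$ with $\eta\in C^\infty_c(0,T)$ and $\psi\in H^1_0(\Omega)$. Setting $I_n(t):=\int_\Omega\sigma_{D_n(t)}\nabla\tilde u_n(t)\cdot\nabla\psi\,dx$, estimate \eqref{f-apriori} and $\alpha\le\sigma_{D_n}\le\beta$ yield $\sup_n\|I_n\|_{L^\infty(0,T)}<\infty$. By dominated convergence it is enough to prove the pointwise in $t$ limit
$$I_n(t)\longrightarrow\int_\Omega A(t)\nabla u(t)\cdot\nabla\psi\,dx\qquad\text{for every } t\in(0,T].$$

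Fix such a $t$. The sequence $\tilde u_n(t)$ is bounded in $H^1_0(\Omega)$ by \eqref{f-apriori}, and the pair of facts $\|\tilde u_n(t)-u_n(t)\|_{L^2}\le\tau_n\|\dot u_n(t)\|_{L^2}\to 0$ and $u_n(t)\to u(t)$ strongly in $L^2(\Omega)$ (from Lemma \ref{lemma1}) together force $\tilde u_n(t)\weakly u(t)$ in $H^1_0(\Omega)$. Rewriting \eqref{E-L-discrete} at time $t$ as
$$-{\rm div}\bigl(\sigma_{D_n(t)}\nabla\tilde u_n(t)\bigr)=f_n(t)-\dot v_n(t)\qquad\text{in } H^{-1}(\Omega),$$
Lemma \ref{lemma1}(iii) bounds the right-hand side in $L^2(\Omega)$ uniformly in $n$. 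The compact embedding $L^2(\Omega)\hookrightarrow H^{-1}(\Omega)$ then upgrades any $L^2$-weak subsequential limit of $\dot v_n(t)$ to strong $H^{-1}$-convergence; combined with $f_n(t)\to f(t)$ in $L^2(\Omega)$ and the $G$-convergence $\sigma_{D_n(t)}\G A(t)$ of Lemma \ref{conv-G}, the classical Murat--Tartar identification theorem applied at this fixed $t$ yields $\sigma_{D_n(t)}\nabla\tilde u_n(t)\weakly A(t)\nabla u(t)$ in $L^2(\Omega;\R^N)$. Testing against $\nabla\psi$ gives the pointwise limit, and since this limit is independent of the extracted subsequence, the whole sequence converges.

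The main obstacle is exactly this last step: the inertial term $\dot v_n(t)$ is only $L^2(\Omega)$-bounded and, \emph{a priori}, only weakly convergent, whereas the $G$-convergence machinery requires the right-hand side of the elliptic equation satisfied by $\tilde u_n(t)$ to converge \emph{strongly} in $H^{-1}(\Omega)$. The compactness of the embedding $L^2\hookrightarrow H^{-1}$ on the bounded domain $\Omega$ is the structural fact that makes this upgrade possible and ultimately allows the pointwise-in-time $G$-convergence to propagate through the dynamic problem.
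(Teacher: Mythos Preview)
Your argument is a correct deduction from the lemmas stated in the paper, and it is genuinely different from---and considerably shorter than---the paper's own proof. The paper does not work pointwise in $t$: after integrating \eqref{E-L-discrete} in time and passing to the limit to obtain \eqref{E-L-1} with an unidentified flux $\xi(t)$ (the weak $L^2$ limit of $\sigma_{D_n(t)}\nabla\tilde u_n(t)$), it identifies $\xi(t)=A(t)\nabla u(t)$ through a time-averaging argument. One integrates \eqref{E-L-discrete} over $(\tau-h,\tau)$, compares the time average $\bar u_n$ of $\tilde u_n$ with the solution $\hat u_n$ of an auxiliary elliptic problem with the \emph{frozen} coefficient $\sigma_{D_n(\tau)}$, and controls $\|\nabla(\hat u_n-\bar u_n)\|_{L^2}^2$ by $|D_n(\tau)\setminus D_n(\tau-h)|$ using the monotonicity of the damage sets together with the weak threshold property \eqref{weak-threshold}; the identification then follows by sending $h\to0$ at continuity points of $\Theta$. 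You bypass all of this structural information by invoking the $L^\infty(0,T;L^2(\Omega))$ bound on $\dot v_n$ from Lemma~\ref{lemma1}(iii), which via the compact embedding $L^2(\Omega)\hookrightarrow H^{-1}(\Omega)$ renders the right-hand side of the elliptic equation for $\tilde u_n(t)$ strongly $H^{-1}$-precompact and lets the Murat--Tartar theory apply directly at each fixed $t$. The trade-off is robustness: your route stands or falls with the uniform $L^2$ bound on the discrete acceleration $\dot v_n(t)$, whereas the paper's time-averaged argument in fact only uses that $v_n\in L^\infty(0,T;L^2(\Omega))$---the difference quotient $\tfrac{v_n(\tau)-v_n(\tau-h)}{h}$ appearing in \eqref{mean-equation} is bounded, for fixed $h$, from this alone---together with the irreversibility of damage.
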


\begin{proof}
We start testing  \eqref{E-L-discrete} with $\phi\in H^1(0,T;H_0^1(\Omega))$, integrating in time. We obtain
	\begin{equation}\label{E-L-discrete-time}
-\int_0^T\int_\Omega v_n\dot{\phi}\,dx +\int_0^T\int_\Omega\sigma_{D_n} \nabla \tilde u_n\nabla\phi\,dx\, dt
=\int_0^T\langle f_n,\phi\rangle\,dt.
\end{equation}
Then taking the limit as $n\to+\infty$ we get
\begin{equation}\label{E-L-1}
-\int_0^T\int_\Omega \dot{u}\dot{\phi}\,dx +\int_0^T\int_\Omega\xi\nabla\phi\,dx\, dt
=\int_0^T\langle f,\phi\rangle\,dt
\end{equation}
where  $\xi(t)$ denotes  the weak limit in $L^2(\Omega)$ of $\sigma_{D_n(t)}\nabla \tilde u_n(t)$.
We only need to show that $\xi(t)=A(t)\nabla u(t)$ a.e. $t\in(0,T)$.

Let $\theta(t)$ be the weak star limit in $L^\infty(\Omega)$ of $\chi_{D_n(t)}$. By the monotonicity in $t$ of the damage sets $D_n(t)$ we deduce that $\Theta(t)=\int_{\Omega}\theta(t)dt$ is increasing and hence continuous up to a countable set of points in $(0,T)$.

Let $\tau\in(0,T)$ be a point of continuity of $\Theta(t)$ and $h>0$ and fix a test function $\phi\in H_0^1(\Omega)$ in \eqref{E-L-discrete}. Integrating in time from $\tau-h$ to $\tau$ we get
\begin{equation}\label{mean-equation}
\begin{split}
 \int_\Omega\sigma_{D_n(\tau)}& \left(\mean_{\tau-h}^\tau\nabla \tilde u_n(t)\,dt\right)\nabla\phi\,dx=-\int_\Omega\frac{v_n(\tau)-v_n(\tau-h)}{h}\phi\,dx\\
 &+ \int_\Omega \left(\mean_{\tau-h}^\tau(\sigma_{D_n(\tau)}-\sigma_{D_n(t)})\nabla \tilde u_n(t)\,dt\right)\nabla\phi\,dx+\langle \bar{f}_n,\phi\rangle\,,
\end{split}
\end{equation}
where $\bar{f}_n := -\!\!\!\!\!\!\int_{\tau-h}^\tau {f}_n(t)\,dt$ is the time average of $f_n$ in the interval $(\tau-h, \tau)$.
We also define
$$
\bar{u}_n=\mean_{\tau-h}^\tau \tilde u_n(t)\,dt\qquad \bar{u}=\mean_{\tau-h}^\tau u(t)\,dt
$$
and we denote by $\hat{u}_n$ the unique solution in $H^1_0(\Omega)$ of the elliptic equation
\begin{equation}\label{elliptic}
-{\rm div}(\sigma_{D_n(\tau)}\nabla\hat{u}_n)=-{\rm div}(A(\tau)\nabla\bar{u}).
\end{equation}
As a consequence of the $G$-convergence of $\sigma_{D_n(\tau)}$ to $A(\tau)$ we deduce that $\hat{u}_n$ converges to $\bar{u}$ weakly in $H^1_0(\Omega)$ and hence that $\hat{u}_n-\bar{u}_n\weakly 0$ weakly in $H^1_0(\Omega)$, and $\sigma_{D_n(\tau)}\nabla\hat{u}_n$ weakly converges to $A(\tau)\nabla\bar{u}$ in $L^2$. Using $\hat{u}_n-\bar{u}_n$ as test function in \eqref{mean-equation} and \eqref{elliptic} we get
\begin{equation*}%
\begin{split}
\int_\Omega\sigma_{D_n(\tau)}|\nabla(\hat{u}_n-\bar{u}_n)|^2dx=&-\int_\Omega\frac{v_n(\tau)-v_n(\tau-h)}{h}(\hat{u}_n-\bar{u}_n)\,dx\cr-& \int_\Omega \left(\mean_{\tau-h}^\tau(\beta-\alpha)\chi_{D_n(\tau)\setminus D_n(t)}\nabla \tilde u_n(t)\,dt\right)\nabla(\hat{u}_n-\bar{u}_n)\,dx\\
+&\langle \bar f_n,\hat{u}_n-\bar{u}_n\rangle 
-\int_\Omega (A(\tau)\nabla\bar u)\nabla(\hat{u}_n-\bar{u}_n)\,dx.
\end{split}
\end{equation*}
From this, using the boundness of $v_n$ in $W^{1,\infty}(0,T; L^2(\Omega))$, the boundness of $\bar f_n$ in $L^2(\Omega)$, and the strong $L^2$ convengence to zero of $\hat{u}_n-\bar{u}_n$ we get
\begin{equation}
\begin{split}\label{estimate1}
&\int_\Omega\sigma_{D_n(\tau)}|\nabla(\hat{u}_n-\bar{u}_n)|^2dx\\
\leq&  \int_\Omega \left(\mean_{\tau-h}^\tau(\beta-\alpha)\chi_{D_n(\tau)\setminus D_n(t)}|\nabla \tilde u_n(t)|\,dt\right)|\nabla(\hat{u}_n-\bar{u}_n)|\,dx + o(1)\cr
\leq& C \left[\int_\Omega \left(\mean_{\tau-h}^\tau\chi_{D_n(\tau)\setminus D_n(t)}|\nabla \tilde u_n(t)|\,dt\right)^2\,dx\right]^{\frac12} \|\nabla(\hat{u}_n-\bar{u}_n)\|_{L^2}+ o(1)
\end{split}
\end{equation}
where we also applied H\"older inequality. Now by Jensen inequality and Remark 1 and \eqref{weak-threshold}, we get
\begin{equation}\label{estimate2}
\begin{split}
\int_\Omega  \big(\mean_{\tau-h}^\tau\chi_{D_n(\tau)\setminus D_n(t)}|&\nabla \tilde u_n(t)|\,dt\big)^2\,dx\\
\leq &\int_\Omega\mean_{\tau-h}^\tau\chi_{D_n(\tau)\setminus D_n(t)}|\nabla \tilde u_n(t)|^2\,dt\,dx \cr
= &\mean_{\tau-h}^\tau\int_{D_n(\tau)\setminus D_n(t)}|\nabla \tilde u_n(t)|^2\,dx\,dt\cr
\leq & M^2\mean_{\tau-h}^\tau|D_n(\tau)\setminus D_n(t)|\, dt +o(1)\\ \leq &
M^2|D_n(\tau)\setminus D_n(\tau-h)|+o(1)\,.
\end{split}
\end{equation}
Applying Young's inequality from \eqref{estimate1} and \eqref{estimate2} we obtain that there exists a constant $C>0$ such that
\begin{equation}\label{estimate3}
\int_\Omega|\nabla(\hat{u}_n-\bar{u}_n)|^2dx\leq C |D_n(\tau)\setminus D_n(\tau-h)| + o(1)\,,
\end{equation}
and therefore
\begin{equation}\label{estimate4}
\limsup_{n\to \infty}\int_\Omega|\nabla(\hat{u}_n-\bar{u}_n)|^2dx\leq C (\Theta(\tau)- \Theta(\tau-h))\,.
\end{equation}
From this we get
\begin{equation*}
\begin{split}
\int_\Omega\big|\mean_{\tau-h}^\tau\sigma_{D_n(t)}\nabla\tilde u_n(t)\,dt -&\sigma_{D_n(\tau)}\nabla\hat u_n  \big|^2dx\\\leq & 2\int_\Omega\left(\mean_{\tau-h}^\tau(\beta-\alpha)\chi_{D_n(\tau)\setminus D_n(t)}|\nabla\tilde u_n(t)|\,dt \right)^2dx\cr
+ & 2\beta^2\int_{\Omega}|\nabla(\hat{u}_n-\bar{u}_n)|^2dx\cr \leq & C (\Theta(\tau)- \Theta(\tau-h)) +o(1)\,.
\end{split}
\end{equation*}
Now by the definition of $\hat u_n$, taking the limit as $n\to+\infty$ we get
\begin{equation}\label{last-estimate}
\int_\Omega\left|\mean_{\tau-h}^\tau\xi(\tau)\,dt -A(\tau)\nabla\bar u  \right|^2dx\leq C (\Theta(\tau)- \Theta(\tau-h))\,.
\end{equation}
Using the fact that a.e. $\tau\in(0,T)$ is a Lebesgue point of $\sigma(\tau)$ and $u(\tau)$ and a continuity point for $\Theta(\tau)$, taking the limit as $h\to 0$ we get
\begin{equation}\label{very-last-estimate}
\int_\Omega|\xi(\tau) -A(\tau)\nabla u(\tau) |^2dx\leq \lim_{h\to 0} C (\Theta(\tau)- \Theta(\tau-h))=0\,,
\end{equation}
which concludes the proof.

\end{proof}

It remains to prove the energy inequality (\ref{enbal}). This is done in the following lemma where we use the same technique as in \cite{DSc}.
We define
$$E_{tot}(t,u,\theta,A):=\frac{1}{2}\normm{\dot{u}}_{L^2}^2+\frac{1}{2}\int_{\Omega}A\nabla u \nabla u dx+k\int_{\Omega}\theta dx-\langle f(t),u\rangle$$
\begin{lemma}
Let the triple $(u(t),\theta(t),A(t))$ be the limit of  $(u_n(t),\chi_{D_n}(t),\sigma_{D_n}(t))$ and denote
	$$E_{tot}(t):=E_{tot}(t,u(t),\theta(t),A(t)).$$
	It holds
	$$E_{tot}(t)\leq E_{tot}(0^+)-\int_{0}^{t}\langle \dot{f}(s),u(s)\rangle ds.$$
\end{lemma}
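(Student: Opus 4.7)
The plan is to prove a discrete version of the energy inequality at the level of the incremental scheme and then pass to the limit using lower semicontinuity. Starting from identity \eqref{f-identity}, which is the discrete analogue of testing the wave equation by $\dot u$, I sum over $j=0,\dots,i$ and use the pointwise relation $\sigma_{D^n_{j+1}}=\sigma_{D^n_j}-(\beta-\alpha)\chi_{D^n_{j+1}\setminus D^n_j}$ to make the elastic energy telescope. Adding $k|D_n(t)|$ to both sides, this produces the exact discrete balance
\begin{equation*}
\mathcal{E}^{\mathrm{pot}}_n(t)+\mathcal{R}_n(t)=\mathcal{E}^{\mathrm{pot}}_n(0)+\sum_{j=0}^{i}\langle f(t^n_{j+1}),u^n_{j+1}-u^n_j\rangle+\mathcal{D}_n(t),
\end{equation*}
where $\mathcal{E}^{\mathrm{pot}}_n(t):=\tfrac12\|\dot u_n(t)\|_{L^2}^2+\tfrac12\int\sigma_{D_n(t)}|\nabla u_n(t^n_{i+1})|^2\,dx+k|D_n(t)|$, the non-negative discrete ``displacement-dissipation'' $\mathcal{R}_n(t):=\tfrac12\sum_j\|w_{j+1}-w_j\|_{L^2}^2+\tfrac12\sum_j\int\sigma_{D^n_{j+1}}|\nabla(u^n_{j+1}-u^n_j)|^2\,dx$, and the ``damage-dissipation surplus'' $\mathcal{D}_n(t):=\sum_j\bigl[k|D^n_{j+1}\setminus D^n_j|-\tfrac{\beta-\alpha}{2}\int_{D^n_{j+1}\setminus D^n_j}|\nabla u^n_j|^2\,dx\bigr]$.

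The next step bounds $\mathcal{D}_n(t)$ from above by means of approximate minimality. Testing $(\tilde u^n_{j+1},D^n_{j+1})$ against the competitor $(\tilde u^n_{j+1},D^n_{j+1}\setminus E)$ for $E\subseteq D^n_{j+1}\setminus D^n_j$ (the displacement is unchanged, so the kinetic and forcing terms cancel) yields $k|E|\leq\tfrac{\beta-\alpha}{2}\int_E|\nabla\tilde u^n_{j+1}|^2\,dx+\tau_n^2/2^j$, and with $E=D^n_{j+1}\setminus D^n_j$ this gives, after summation,
\begin{equation*}
\mathcal{D}_n(t)\leq\tfrac{\beta-\alpha}{2}\sum_j\int_{D^n_{j+1}\setminus D^n_j}\bigl(|\nabla\tilde u^n_{j+1}|^2-|\nabla u^n_j|^2\bigr)\,dx+2\tau_n^2.
\end{equation*}
Expanding $|\nabla\tilde u^n_{j+1}|^2-|\nabla u^n_j|^2=|\nabla(\tilde u^n_{j+1}-u^n_j)|^2+2\nabla u^n_j\cdot\nabla(\tilde u^n_{j+1}-u^n_j)$ and applying Young's inequality, the $|\nabla(\tilde u^n_{j+1}-u^n_j)|^2$ piece is absorbed into $\mathcal{R}_n(t)$ (using $\|\tilde u^n_{j+1}-u^n_{j+1}\|_{H^1}^2\leq C\tau_n^2/2^j$, which follows from strict convexity of the $u$-problem in \eqref{mind}), while the mixed contribution is controlled by combining the weak threshold \eqref{weak-threshold} (which provides a quantitative bound on $|\nabla u^n_j|$ on $\Omega\setminus D^n_j\supseteq D^n_{j+1}\setminus D^n_j$) with the pairwise disjointness of the sets $D^n_{j+1}\setminus D^n_j$ coming from the irreversibility of damage. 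The net outcome is $\mathcal{D}_n(t)\leq\mathcal{R}_n(t)+o(1)$.

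Using this bound in the discrete balance and applying the Abel summation $\sum_j\langle f(t^n_{j+1}),u^n_{j+1}-u^n_j\rangle=\langle f(t^n_{i+1}),u^n_{i+1}\rangle-\langle f(t^n_1),u_0\rangle-\sum_{j\geq 1}\langle f(t^n_{j+1})-f(t^n_j),u^n_j\rangle$, and subtracting $\langle f(t),u_n(t)\rangle$ from both sides, one arrives at the discrete total-energy inequality
\begin{equation*}
\mathcal{E}^{\mathrm{tot}}_n(t)\leq\mathcal{E}^{\mathrm{tot}}_n(0)-\int_0^t\langle\dot f_n(s),u_n(s)\rangle\,ds+o(1).
\end{equation*}
The passage to the limit $n\to\infty$ then combines: the weak-$L^2$ lower semicontinuity of the kinetic term via $\dot u_n(t)\weakly\dot u(t)$ from \eqref{conv-punt}; the lower semicontinuity of the elastic energy under weak $H^1$ convergence of $u_n(t)$ together with the $G$-convergence $\sigma_{D_n(t)}\G A(t)$ (equivalent, in the symmetric case, to $\Gamma$-convergence of the associated quadratic forms); the continuity of the damage cost $k|D_n(t)|\to k\int\theta(t)$ from $\chi_{D_n(t)}\wsc\theta(t)$; the matching of the initial energy $\mathcal{E}^{\mathrm{tot}}_n(0)\to \mathcal{E}^{\mathrm{tot}}(0^+)$ afforded by property (i) of Theorem \ref{EA}; and the convergence of the external-work term from the strong $L^2$-convergence of $\dot f_n$ in \eqref{f-fn} together with the uniform bound on $u_n$ in $L^\infty(0,T;H^1_0)$.

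The main technical obstacle is the swap estimate in the second paragraph: a naive Cauchy--Schwarz on $\sum_j\int_{D^n_{j+1}\setminus D^n_j}\nabla u^n_j\cdot\nabla(\tilde u^n_{j+1}-u^n_j)$ diverges like $1/\sqrt{\Delta t}$, since each $\|\nabla(u^n_{j+1}-u^n_j)\|_{L^2}$ is only of order $\sqrt{\Delta t}$ on average over $O(1/\Delta t)$ steps. The argument closes only because the newly damaged sets $D^n_{j+1}\setminus D^n_j$ are pairwise disjoint and because \eqref{weak-threshold} controls $|\nabla u^n_j|$ on $\Omega\setminus D^n_j$, so that the localized integrals can be summed into a quantity depending only on $|D_n(t)\setminus D_0|$. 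This same swap is also the reason why \eqref{enbal} is an inequality and not an equality: the discrepancy between pre- and post-damage gradients on the newly damaged region produces a non-negative defect that is, in general, not recoverable in the limit.
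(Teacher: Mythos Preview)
Your route differs from the paper's, and the step you yourself flag as ``the main technical obstacle'' does not close. From the approximate minimality you correctly obtain
\[
\mathcal D_n(t)\le \tfrac{\beta-\alpha}{2}\sum_j\int_{D^n_{j+1}\setminus D^n_j}\bigl(|\nabla u^n_{j+1}|^2-|\nabla u^n_j|^2\bigr)\,dx+o(1),
\]
after replacing $\tilde u^n_{j+1}$ by $u^n_{j+1}$. But neither piece of the right-hand side is controlled the way you claim. For the square term, on $D^n_{j+1}\setminus D^n_j$ one has $\sigma_{D^n_{j+1}}=\alpha$, so you only get
\[
\tfrac{\beta-\alpha}{2}\sum_j\int_{D^n_{j+1}\setminus D^n_j}|\nabla(u^n_{j+1}-u^n_j)|^2\le \tfrac{\beta-\alpha}{\alpha}\,\mathcal R_n^{\mathrm{el}}(t),
\]
and the prefactor $\tfrac{\beta-\alpha}{\alpha}$ can exceed $1$. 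For the mixed term, disjointness of the $D^n_{j+1}\setminus D^n_j$ together with the weak threshold only yields $\sum_j\int_{D^n_{j+1}\setminus D^n_j}|\nabla u^n_j|^2\le M^2|\Omega|+o(1)$, a \emph{bound}, not smallness; any Young splitting then leaves either a multiple of $\mathcal R_n^{\mathrm{el}}$ with coefficient $>1$ or a fixed positive constant. Hence you cannot conclude $\mathcal D_n(t)\le \mathcal R_n(t)+o(1)$, and the discrete energy inequality does not follow. A secondary issue: ``$\mathcal E^{\mathrm{tot}}_n(0)\to E_{tot}(0^+)$'' is not granted by property (i), which only says $A(0^+)\le\sigma_{D_0}$; the elastic term drops and $k\int\theta(0^+)\ge k|D_0|$, so the two initial energies need not match.

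The paper avoids this swap problem by testing \eqref{incremental} with a \emph{joint} $\delta$-variation of displacement and damage: $D=D^n_{i+1}\setminus E_\delta$ with $E_\delta\subset D^n_{i+1}\setminus D^n_i$, $|E_\delta|=\delta\,|D^n_{i+1}\setminus D^n_i|$, and $u=u^n_{i+1}-\delta(u^n_{i+1}-u^n_i)$. After expanding and dividing by $\delta$, the $O(1)$ terms cancel and the $O(\delta)$ terms produce directly the incremental energy inequality with error $\tau_n^2/(\delta\,2^i)$; summing from $j=1$ and choosing $\delta=\delta_n$ with $\tau_n^2\ll\delta_n\ll\tau_n$ kills the residual terms. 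The simultaneous variation is precisely what supplies the cancellation that your ``swap from $\nabla\tilde u^n_{j+1}$ to $\nabla u^n_j$'' cannot produce.
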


\begin{proof}
	Using the  almost minimality condition of $(\U{i+1},\Dn{i+1})$, (\ref{mind}), we have
	\begin{equation}\label{minim}
	\begin{split}
	&  \frac{1}{2}\int_{\Omega}\sigma_{\Dn{i+1}}|\nabla \U{i+1}|^2dx+k|\Dn{i+1}|  +\frac{1}{2}  \left \|\frac{\U{i+1}-\U{i}}{\Delta t}
 -\frac{\U{i}-\U{i-1}}{\Delta t}\right \|_{L^2}^2		\\
 &\qquad-\langle f^{n}_{i+1},u^{n}_{i+1}\rangle 
	\\
	 &\leq \frac{1}{2}\int_{\Omega}\sigma_{D}|\nabla u|^2dx+k|D|+\frac{1}{2}\left \|\frac{u-\U{i}}{\Delta t}-\frac{\U{i}-\U{i-1}}{\Delta t}\right \|_{L^2}^2
	 \\
	&\qquad -\langle f^{n}_{i+1},u\rangle+\frac{\tau^2_n}{2^i },
	\end{split}
	\end{equation}
	for $D\supseteq \Dn{i}$ and $u\in H^{1}_0(\Omega)$.\\
	Now for any $\delta \in (0,1)$ we choose a set $E_\delta$ with the following properties
	\begin{itemize}
		\item $E_{\delta}\subseteq \Dn{i+1}\setminus \Dn{i}$,
		\item $|E_{\delta}|=\delta(|\Dn{i+1}|-|\Dn{i}|)$.
	\end{itemize}
	We then take a test set in \eqref{minim} given by  $D=\Dn{i+1}\backslash{E_{\delta}}$, which then satisfies
	$$|D|=\delta(|\Dn{i}|-|\Dn{i+1}|)+|\Dn{i+1}|, \qquad \text{and} \qquad \sigma_{D}=\sigma_{\Dn{i+1}}+(\beta-\alpha)\chi_{E_{\delta}}.$$
	Therefore the right hand side of (\ref{minim}) becomes
	\begin{equation}\label{rig}
	\begin{split}
	& \frac{1}{2}\int_{\Omega}\sigma_{\Dn{i+1}}|\nabla u|^2dx+\frac{(\beta-\alpha)}{2}\int_{E_{\delta}}|\nabla u|^2dx +k\delta (|\Dn{i}|-|\Dn{i+1}|)\cr & + k|\Dn{i+1}|+\frac{1}{2}  \left \|\frac{u-\U{i}}{\Delta t}-\frac{\U{i}-\U{i-1}}{\Delta t}\right \|_{L^2}^2-\langle f^{n}_{i+1},u\rangle + \frac{\tau^2_n}{2^i}.
	\end{split}
	\end{equation}
	Now we consider as test function $\bar{u}:=\U{i+1}-\delta(\U{i+1}-\U{i})$, then
	\begin{equation}\label{one}
	\begin{split}
	&\frac{1}{2}\int_{\Omega}\sigma_{\Dn{i+1}}|\nabla\bar{u}|^{2}dx-\langle f^{n}_{i+1},\bar{u}\rangle=\frac{1}{2}\int_{\Omega}\sigma_{\Dn{i+1}} |\nabla\U{i+1}|^{2}dx\cr &+ \delta\bigg(\frac{\delta-2}{2} \bigg)\int_{\Omega}\sigma_{\Dn{i+1}}|\nabla\U{i+1}|^{2}dx  +\frac{\delta^{2}}{2}\int_{\Omega}\sigma_{\Dn{i+1}}|  \nabla \U{i}|^{2}dx\cr &+ \delta(1-\delta)\int_{\Omega}\sigma_{\Dn{i+1}}\nabla{\U{i+1}}\nabla\U{i}dx-\langle f^{n}_{i+1},u^{n}_{i+1}\rangle+\delta\langle f^{n}_{i+1},u^{n}_{i+1}-u^n_i\rangle
	\end{split}
	\end{equation}
	and
	\begin{equation}\label{second}
	\begin{split}
	\frac{1}{2}  \Big\|\frac{\bar{u}-\U{i}}{\Delta t}&-\frac{\U{i}-\U{i-1}}{\Delta t}\Big\|_{L^2}^2\cr & =\frac{1}{2}  \left \|\frac{\U{i+1}-\U{i}}{\Delta t}-\frac{\U{i}-\U{i-1}}{\Delta t}\right \|_{L^2}^2 \cr & +  \frac{\delta^{2}}{2} \left \|\frac{\U{i+1}-\U{i}}{\Delta t}\right \|_{L^2}^2 \cr &-\delta\int_{\Omega}\bigg(\frac{\U{i+1}-\U{i}}{\Delta t} -\frac{\U{i}-\U{i-1}}{\Delta t}\bigg)\bigg(\frac{\U{i+1}-\U{i}}{\Delta t}\bigg)dx.
	\end{split}
	\end{equation}
	
	Combining (\ref{minim}), (\ref{rig}), (\ref{one}), (\ref{second}) and dividing by $\delta$ we have
	\begin{equation}\label{uff}
	\begin{split}
	\bigg(\frac{2-\delta}{2}\bigg)\int_{\Omega}&\sigma_{\Dn{i+1}}|\nabla \U{i+1}|^{2}dx   -(1-\delta)\int_{\Omega}\sigma_{\Dn{i+1}}\nabla\U{i}\nabla\U{i+1}dx  \cr &  + \int_{\Omega}\bigg(\frac{\U{i+1}-\U{i}}{\Delta t} -\frac{\U{i}-\U{i-1}}{\Delta t} \bigg)(\frac{\U{i+1}-\U{i}}{\Delta t}) dx
	\cr  \leq\frac{\delta}{2}\int_{\Omega}\sigma_{\Dn{i+1}}&|\nabla\U{i}| ^{2}dx+\frac{\delta}{2}\left \|\frac{\U{i+1}-\U{i}}{\Delta t}\right \|^{2} +k(|\Dn{i}|- |\Dn{i+1}|)\cr & +\frac{(\beta-\alpha)}{2}\int_{E_{\delta}}|\nabla\bar{u}|^{2}dx \cr & + \Delta t\langle f^{n}_{i+1},\frac{u^{n}_{i+1}-u^n_i}{\Delta t}\rangle +\frac{\tau_n^2}{\delta 2^i }.
	\end{split}
	\end{equation}
	We note that the first and third term of the left handside satisfy
	\begin{equation}\label{re}
	\begin{split}
	\bigg(\frac{2-\delta}{2}\bigg)\int_{\Omega}\sigma_{\Dn{i+1}}|\nabla \U{i+1}|^{2}dx & -(1-\delta)\int_{\Omega}\sigma_{\Dn{i+1}}\nabla\U{i}\nabla\U{i+1}dx \cr & \geq (1-\delta)\int_{\Omega}\sigma_{\Dn{i+1}}\nabla{\U{i+1}}\nabla(\U{i+1}-\U{i})dx.
	\end{split}
	\end{equation}
	Now considering the following identities (see \cite{DSc} pages 14,15, and 16):
\begin{equation}
\begin{split}
&\int_{\Omega}\sigma_{\Dn{i+1}}\nabla\U{i+1}(\nabla\U{i+1}-\nabla\U{i})dx\\
&=
 \int_{t_i}^{t_{i+1}}\int_{\Omega}\sigma_{\Dn{i+1}}\nabla u_n(s)\nabla\dot{u}_n(s)ds+\frac{\Delta t}{2}\int_{t_i}^{t_{i+1}}\int_{\Omega}\sigma_{\Dn{i+1}}\nabla\dot{u}_n(s)\nabla\dot{u}_n(s)ds
 \end{split}
\end{equation} 
\begin{equation}
\begin{split}
&\int_{\Omega}\bigg(\frac{\U{i+1}-\U{i}}{\Delta t} -\frac{\U{i}-\U{i-1}}{\Delta t}\bigg)(\frac{\U{i+1}-\U{i}}{\Delta t})\\&=\frac{1}{2}\left\|\frac{\U{i+1}-\U{i}}{\Delta t} \right\|_{L^2}^{2}
-\frac{1}{2}\left\|\frac{\U{i}-\U{i-1}}{\Delta t} \right\|_{L^2}^{2}+\frac{\Delta t}{2}\int_{t_{i}}^{t_{i+1}}\|\dot{v}_n(s)\|^{2}ds
\end{split}
\end{equation}
		
		\begin{equation}
		\begin{split}
		\frac{\delta}{2}\int_{\Omega}\sigma_{\Dn{i+1}}|\nabla \U{i}|^2dx=\frac{\delta}{2\Delta t}\int_{t_i}^{t_{i+1}}\int_{\Omega}\sigma_{\Dn{i+1}}|\nabla u_{n}(s)|^{2}ds\\
		-\frac{\delta}{2}\int_{t_i}^{t_{i+1}}\int_{\Omega}\sigma_{\Dn{i+1}}\nabla u_{n}(s)\nabla\dot{u}(s)ds\\+\frac{\delta\Delta t}{12}\int_{t_i}^{t_{i+1}}\int_{\Omega}\sigma_{\Dn{i+1}}|\nabla \dot{u}_{n}(s)|^2ds
		\end{split}
		\end{equation}
	and using (\ref{re}) the inequality (\ref{uff}) becomes
	\begin{equation*}
	\begin{split}
	&(1-\delta)\int_{t_i}^{t_{i+1}}  \int_{\Omega}\sigma_{\Dn{i+1}}\nabla u_n(s)  \nabla\dot{u}_n(s)ds\\&+(1-\delta)\frac{\Delta t}{2}\int_{t_i}^{t_{i+1}}\int_{\Omega}\sigma_{\Dn{i+1}}\nabla\dot{u}_n(s)\nabla\dot{u}_n(s)ds \cr &+\frac{1}{2}\left\|\frac{\U{i+1}-\U{i}}{\Delta t} \right\|_{L^2}^{2}-\frac{1}{2}\left\|\frac{\U{i}-\U{i-1}}{\Delta t} \right\|_{L^2}^{2}+\frac{\Delta t}{2}\int_{t_{i}}^{t_{i+1}}|\dot{v}_n(s)|^{2}ds \cr &
	\leq \frac{\delta}{2\Delta t}\int_{t_i}^{t_{i+1}}\int_{\Omega}\sigma_{\Dn{i+1}}|\nabla u_{n}(s)|^{2}ds-\frac{\delta}{2}\int_{t_i}^{t_{i+1}}\int_{\Omega}\sigma_{\Dn{i+1}}\nabla u_{n}(s)\nabla\dot{u}(s)ds \cr & +\frac{\delta\Delta t}{12}\int_{t_i}^{t_{i+1}}\int_{\Omega}\sigma_{\Dn{i+1}}|\nabla \dot{u}_{n}(s)|^2ds+\frac{\delta}{2\Delta t} \int_{t_i}^{t_{i+1}}\normm{\dot{u_n}(s)}ds \cr &+k(|\Dn{i}|-|\Dn{i+1}|)+(\beta-\alpha)\int_{E_{\delta}}|\nabla\bar{u}|^2dx+\int_{t_{i}}^{t^{i+1}}\langle f^n_{i+1},\dot{u}_n(s)\rangle ds+\frac{\tau_n^2}{\delta 2^i }
	\end{split}
	\end{equation*}
	\text{}\\
	which can be rewritten in the form
	\begin{equation*}
	\begin{split}
	&\frac{2-\delta}{2}\int_{t_i}^{t_{i+1}}  \int_{\Omega}\sigma_{\Dn{i+1}}\nabla u_n(s)  \nabla\dot{u}_n(s)ds +k(|\Dn{i+1}|-|\Dn{i}|) \cr &+
	\frac{1}{2}\left\|\frac{\U{i+1}-\U{i}}{\Delta t} \right\|_{L^2}^{2}-\frac{1}{2}\left\|\frac{\U{i}-\U{i-1}}{\Delta t} \right\|_{L^2}^{2}+\frac{\Delta t}{2}\int_{t_{i}}^{t_{i+1}}|\dot{v}_n(s)|^{2}ds \cr & \leq
	\frac{(7\delta-6)\Delta t}{12}\int_{t_i}^{t_{i+1}}\int_{\Omega}\sigma_{\Dn{i+1}}|\nabla \dot{u}_{n}(s)|^2ds  +\frac{\delta}{2\Delta t} \int_{t_i}^{t_{i+1}}\normm{\dot{u}_n(s)}ds \cr & +   \frac{\delta}{2\Delta t}\int_{t_i}^{t_{i+1}}\int_{\Omega}\sigma_{\Dn{i+1}}|\nabla u_{n}(s)|^{2}ds\\ &+  (\beta-\alpha)\int_{E_{\delta}}|\nabla\bar{u}|^2dx-\int_{t_{i}}^{t^{i+1}}\langle \dot{f}(s),{u}_n(s)\rangle ds+\frac{\tau_n^2}{\delta 2^i }
	\end{split}
	\end{equation*}
	Moreover let note that  the first term on the left hand side of the latter inequality can be rewritten (integrating by part) as 
	\begin{equation*} \label{fir}
	\frac{2-\delta}{4}\bigg[\int_{\Omega}\!\sigma_{\Dn{i+1}}|\nabla u_n(t_{i+1})|^2dx-\int_{\Omega}\!\sigma_{\Dn{i}}|\nabla u_n(t_{i})|^2dx+(\beta-\alpha)\!\int_{\Dn{i+1}\backslash \Dn{i}}\!\!\!\!|\nabla u_n(t_i)|^2dx\bigg]
	\end{equation*}
	with
	\begin{equation}\label{mag}
	(\beta-\alpha)\int_{\Dn{i+1}\backslash \Dn{i}}|\nabla u_n(t_i)|^2dx\geq 0.
	\end{equation}
	Therefore summing over $i=1,\ldots, j$, with $t\in (t_{j-1},t_j)$, we obtain
	\begin{equation*}
	\begin{split}
	&\frac12\int_{\Omega}\sigma_{D_n(t)}|\nabla u_n(t)|^2dx-\frac12\int_{\Omega}\sigma_{D_n(\tau_n)}|\nabla u_n(\tau_n)|^2dx + K(|D_n(t)|-|D_n(\tau_n)|)\\
	&
	\frac12\|v_n(t)\|^2_2-\frac12\|v_n(\tau_n)\|^2_2 + \frac{\tau_n}{2}\int_{\tau_n}^t\|v_n(s)\|^2_2 ds\\
	&\qquad \leq -\int_{\tau_n}^t\langle \dot{f}(s), u_n(s)\rangle ds + C\frac{\delta}{\tau_n} +o(\delta) +\frac{\tau_n^2}{\delta}.
	\end{split}
	\end{equation*}
	We then conclude choosing $\delta=\delta_n\to 0$ as $n\to \infty$, with $\tau_n^2<\!\!<\delta_n<\!\!<\tau_n$, so that $\frac{\delta_n}{\tau_n}\to 0$ and $\frac{\tau_n^2}{\delta}\to 0$, and using the convergence of $u_n$, $\chi_{D_n}$ and $\sigma_{D_{n}}$.
\end{proof}

\section{ Threshold condition}\label{sec-threshold}
We now prove the threshold property as stated in (\ref{thrdisc}) using the blow-up argument proposed in \cite{GL}.
\begin{proposition}
Given $(u_n(t), D_n(t))$ as in (\ref{disc1}) we have the following threshold condition
\begin{equation}\label{thco}
\lim_{n\lra\infty}|\{x\notin D_n(t) : |\nabla u_n(t)|>\lambda+\delta\}|=0
\end{equation}
for each $\delta>0$ and $\lambda:=\sqrt{\frac{2\alpha k}{\beta(\beta-\alpha)}}$.
\end{proposition}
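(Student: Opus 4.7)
I would prove this by contradiction using a blow-up and rank-one laminate construction in the spirit of the quasi-static argument of \cite{GL}, adapted to absorb the extra kinetic term from the incremental scheme \eqref{incremental}. The first step is a reduction from $u_n(t)$ to the almost-minimizer $\tilde u_{i+1}^n$ appearing in \eqref{incremental}. Since $u_{i+1}^n$ is the true minimizer of \eqref{mind} and $\tilde u_{i+1}^n$ only an approximate minimizer with error $\tau_n^2/2^i$, strict convexity of the quadratic functional yields $\|\nabla(u_{i+1}^n-\tilde u_{i+1}^n)\|_{L^2}^2\leq 2\tau_n^2/(\alpha 2^i)$. For $t\in(t_i^n,t_{i+1}^n]$ one has $\nabla u_n(t)=(1-s)\nabla u_i^n+s\nabla u_{i+1}^n$ with $s=(t-t_i^n)/\Delta t$, so $|\nabla u_n(t)|\leq\max(|\nabla u_i^n|,|\nabla u_{i+1}^n|)$; combined with $D_i^n\subseteq D_n(t)$ and Chebyshev applied to $\nabla u_j^n-\nabla\tilde u_j^n$, the claim reduces to proving
$$|\{x\in\Omega\setminus D_j^n:\,|\nabla\tilde u_j^n|>\lambda+\delta'\}|\to 0\qquad\text{for every }\delta'>0,\ j.$$

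Assume by contradiction that $|\tilde S_n|\geq\eta>0$ along a subsequence, where $\tilde S_n:=\{x\in\Omega\setminus D_{j_n}^n:|\nabla\tilde u_{j_n}^n|>\lambda+\delta'\}$. Truncating via the weak threshold \eqref{weak-threshold} with some $M>\sqrt{2k/(\beta-\alpha)}$, we may assume $|\nabla\tilde u_{j_n}^n|\leq M$ on $\tilde S_n$ and still $|\tilde S_n|\geq\eta/2$. A Vitali covering at $L^2$-Lebesgue points of $\nabla\tilde u_{j_n}^n$ then produces, for each large $n$, a finite disjoint family of balls $B_{r_{n,l}}(x_{n,l})\subset\Omega\setminus D_{j_n}^n$ with $r_{n,l}\to 0$, $\sum_l|B_{r_{n,l}}|\geq\eta/4$, and constants $\xi_{n,l}$ satisfying $\lambda+\delta'<|\xi_{n,l}|\leq 2M$ and $\frac{1}{|B_{r_{n,l}}|}\int_{B_{r_{n,l}}}|\nabla\tilde u_{j_n}^n-\xi_{n,l}|^2\,dx\to 0$.

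I then build a laminate competitor for the almost minimality of $(\tilde u_{j_n}^n,D_{j_n}^n)$. Fix $\theta=\theta(\delta')>0$ small enough that, with $\tilde A_\theta:=\alpha\beta/(\alpha(1-\theta)+\beta\theta)$, one has $\tfrac12(\beta-\tilde A_\theta)|\xi|^2-k\theta\geq c(\delta')\theta$ for all $|\xi|\geq\lambda+\delta'$; this is possible since $\partial_\theta(\beta-\tilde A_\theta)|_{\theta=0}=\beta(\beta-\alpha)/\alpha$, so the condition $|\xi|>\lambda$ is exactly the requirement $\tfrac12|\xi|^2\beta(\beta-\alpha)/\alpha-k>0$. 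On each $B_{r_{n,l}}(x_{n,l})$, with $\nu_{n,l}=\xi_{n,l}/|\xi_{n,l}|$ and period $\epsilon_n\ll\min_l r_{n,l}$, I insert a rank-one laminate orthogonal to $\nu_{n,l}$: damaged strata of width $\theta\epsilon_n$ (coefficient $\alpha$) alternating with undamaged strata of width $(1-\theta)\epsilon_n$ (coefficient $\beta$). I add a corrector $w_n^l(x)=\epsilon_n\phi((x-x_{n,l})\cdot\nu_{n,l}/\epsilon_n)\eta_{n,l}(x)$, where $\phi$ is the 1-periodic sawtooth whose piecewise-constant derivative is chosen so that the gradient of $\xi_{n,l}\cdot x+w_n^l$ alternates between the two values dictated by the Hadamard jump condition and saturating the harmonic-mean energy density, and $\eta_{n,l}$ is a cutoff supported in $B_{r_{n,l}}$. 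Setting $v_n:=\tilde u_{j_n}^n+\sum_l w_n^l$ and $\tilde D_n:=D_{j_n}^n\cup\bigcup_l E_n^l$ (the union of damaged strata), standard periodic-homogenization estimates combined with the Lebesgue-point approximation give
$$E(t_{j_n}^n,\tilde u_{j_n}^n,D_{j_n}^n)-E(t_{j_n}^n,v_n,\tilde D_n)\geq \tfrac{\eta}{4}\,c(\delta')\,\theta-o(1).$$

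It remains to control the kinetic term. Since $\phi$ is bounded, $\|w_n\|_{L^2}\leq C\epsilon_n$ while $\|\nabla w_n\|_{L^2}\leq C$; the offset $p_n:=(\tilde u_{j_n}^n-u_{j_n-1}^n)/\tau_n-(u_{j_n-1}^n-u_{j_n-2}^n)/\tau_n$ is bounded in $L^2$ by Lemma \ref{lemma1}. Hence the change in the kinetic term of \eqref{incremental} is $\langle p_n,w_n/\tau_n\rangle+\tfrac12\|w_n/\tau_n\|^2=O(\epsilon_n/\tau_n)+O(\epsilon_n^2/\tau_n^2)$, which is $o(1)$ provided $\epsilon_n\ll\tau_n$ (for instance $\epsilon_n=\tau_n^2$). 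Since $v_n\in H^1_0(\Omega)$ and $\tilde D_n\supseteq D_{j_n-1}^n$, the pair is admissible in \eqref{incremental}, and its almost minimality yields $\tfrac{\eta}{4}c(\delta')\theta\leq\tau_n^2/2^{j_n-1}+o(1)$, a contradiction for $n$ large. The main obstacle is precisely this scale matching: $\epsilon_n$ must tend to zero quickly enough to saturate the homogenized harmonic-mean energy density, yet slowly enough that $\|w_n/\tau_n\|_{L^2}$ remains negligible against the $o(1)$ energy gain; the choice $\epsilon_n\ll\tau_n$ reconciles both, and this delicate coupling of the laminate period to the time step is the feature absent from the quasi-static proof in \cite{GL}.
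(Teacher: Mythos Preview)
Your proof is correct and follows the same overall route as the paper: argue by contradiction, cover the bad set by small cubes/balls centered at Lebesgue points of the gradient, insert a rank-one laminate in each to lower the elastic-plus-dissipation energy by a definite amount per unit volume, and exploit that the $L^2$-norm of the resulting perturbation is of order the laminate period, so that by taking this period small relative to $\tau_n$ the change in the kinetic term is negligible and the almost minimality \eqref{incremental} is violated.

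Two minor differences are worth noting. First, the paper works directly with $(u_i^n,D_i^n)$ rather than $(\tilde u_i^n,D_i^n)$: since $u_i^n$ minimizes \eqref{mind}, the pair $(u_i^n,D_i^n)$ itself already satisfies the almost-minimality inequality \eqref{incremental}, so your detour through $\tilde u_j^n$ via strict convexity and Chebyshev is correct but unnecessary. Second, the paper uses an explicit optimal laminate with volume fraction $d=\tilde\delta\alpha/(\lambda(\beta-\alpha))$ and derivative values $\beta\lambda/\alpha$ and $\lambda$, obtaining the sharp gain $\tfrac12\beta(|\nabla u_i^n(\bar x)|-\lambda)^2|Q|\geq\tfrac14\beta\delta^2|Q|$, whereas you use a small fixed $\theta$ together with the first-order expansion of the harmonic mean $\tilde A_\theta$; both suffice, and your version makes the appearance of the precise threshold value $\lambda$ particularly transparent. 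Your explicit identification of the scale coupling $\epsilon_n\ll\tau_n$ is exactly what the paper means when it says ``for $\sigma$ small the two norms above are arbitrarily close'': for each fixed $n$ the time step $\tau_n$ is fixed, and the laminate period (hence $\sigma$) can be driven to zero independently.
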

\begin{proof}
We first prove the result for $(u^n_i,D^n_i)$ then by a convexity argument we will easily obtain the claim.
The first part of the proof is similar to the one in \cite{GL}.\\
Given a set $Q\subseteq\Omega$ we define
$$E(u,D,Q):=\frac{1}{2}\int_{Q}\sigma_{D\cap Q}|\nabla u|+k|D\cap Q|.$$
We set
$$E^n:=E^{n}_{i,\delta}:=\{x\notin D_i^n : |\nabla u^n_i(x)|>\lambda+\delta\}$$
and we suppose by contradiction that there exists $\delta>0$ such that 
$$\limsup_{n\lra\infty}|E^{n}_{i,\delta}|=2\eta$$
with $\eta>0$, which implies that (up to subsequences) 
\begin{equation}\label{et}
|E^{n}|>\eta
\end{equation}
for $n\geq \bar{n}$ for a fixed $\bar{n}>\!\!>1$.\\
We first show that for $n>\!\!>1$ (so using (\ref{et})) there exists an explicit constant $c>0$ and $(\tilde{w}^n_i,\tilde{D}^n_i)$ admissible for the minimum problem (\ref{incremental}) such that
\begin{equation}\label{minel}
E(\tilde{w}^n_i,\tilde{D}_i^n,\Omega)\leq E(u^n_i,{D}_i^n,\Omega)-c,
\end{equation} 
i.e., decreasing the elastic part of the energy in the whole $\Omega$. We do this also showing that the change in the kinetic part of the energy can be made arbitrarily small, so that we obtain competitors with total energy less than that of $(u^n_i,D^n_i)$, which is a contradiction because of the minimality property of $(u^n_i,D^n_i)$. \\

To begin, for each $n\geq \bar{n}$ and $\varepsilon>0$ we consider a covering of $E^n$ made of squares $Q$  such that 

\begin{itemize}
\item[1)] the center $\bar{x}$ of the square is in $E_n$ and it is a Lebesgue point for $u^n_i$ and $\nabla {u^n_i}$, i.e., it holds
$$\lim_{r\lra 0^{+}}\frac{1}{|B_{r}(\bar{x})|}\int_{B_{r}(\bar{x})}|u^n_i(\bar{x})-u^n_i(y)|^{p}dy=0$$
(and the same for $\nabla{u^n_i}$) for all $p\geq 1$, where $B_{r}(\bar{x})$ is the ball with center $\bar{x}$ and radius $r$.
\item[2)] two sides of $Q$ are orthogonal to $\nabla u^n_i(\bar{x});$
\item[3)] defined $\bar{u}^n_i(x):=u^n_i(\bar{x})+\nabla u^n_i(\bar{x})\cdot    (x-\bar{x})$ we have 
$$\normm{u^n_i-\bar{u}^n_i}_{H^1(Q)}^{2}\leq\varepsilon |{Q}|$$
$$|{D_i^n}\cap {Q}|\leq\varepsilon|Q|.$$
\end{itemize}
Note that since $Q$ is a square of a covering of $E^n$ it depends on $n,\delta$ and $i$ and by definition it depends also on $\varepsilon$ and in general its measure goes to zero when $\varepsilon$ goes to zero. Moreover for each $\varepsilon$ it is a fine convering of $E^n$ so we can choose a finite number of disjoint square to cover $E^n$, except for a set of measure less than $\varepsilon$.

We divide the proof into 3 steps which we first sketch and then detail.
For the \emph{first step} we will show that considering test functions in each ${Q}$ with the same boundary condition of $\bar{u}^n_i$ in $\partial Q$ (instead of ${u}^n_i$) we can decrease in this square the elastic energy given by the pair $({\bar{u}}^n_i,\emptyset)$  using a process of lamination, in particular we will show that for each $\sigma>0$ there exist $v^n_i$ and ${{\hat{D}}_i^n}$ such that
\begin{equation} \label{minvatd}
E(v^n_i,{\hat{D}}_i^n,{Q})\leq E(\bar{u}^n_i,\emptyset,{Q})-\frac{1}{4}\beta\delta^2|{Q}|
\end{equation}
with $v^n_i=\bar{u}^n_i$ on $\partial Q$ and $\|v_i^n-\bar{u}^n_i\|_{L^2(Q)}<\sigma|Q|$.

To do it we recall a technical result to match the boundary conditions of special (almost) test functions (which will be piecewise linear functions) with the boundary conditions of $\bar{u}^n_i$.\\
In the \emph{second step}, using the previous one, we will show that in each square ${Q}$ we can lower the energy given by $(u^n_i,D_i^n)$ using a test function with the same boundary conditions of $u^n_i$ and choosing $\varepsilon$ sufficently small, i.e., we will show that for each $s>0$  we can choose $\varepsilon$ small in such a way that there exists $\hat{w}^n_{i}$ and $\hat{D}_n^{i}\subset Q$   such that
\begin{equation} \label{minvat2}
E(\hat{w}^n_{i},\hat{D}_n^{i},{Q})\leq E({u}^n_i,D_i^n,{Q})-\frac{1}{4}\beta\delta^2|{Q}| +s|Q|
\end{equation}
with $\hat{w}^n_{i}={u}^n_i$ on $\partial Q$ and $\|\hat{w}^n_{i}-{u}^n_i\|_{L^2(Q)}<\sigma |Q|$.\\
Finally in the \emph{third step} we will use the previous steps to construct an admissible pair for the problem (\ref{incremental}) that has in $\Omega$ energy lower than the one given by ($u^n_i,D_i^n$).\\
\smallskip

\emph{Step 1}. We consider an arbitrary square of the (almost) covering of $E^n$. 
To avoid heavy notation we can assume that $\bar{x}=0$ and $u^n_i(\bar{x})=0$ and so we have $\bar{u}^n_i(x)=\nabla{u}^n_i(0)\cdot    x$\\
We consider the continuous periodic function $z(y)$ such that $z(0)=0$, $z(1)=|\nabla u^n_i(0)|=\lambda+\tilde\delta$ such that
\begin{equation}
z'(y)=\begin{cases} \frac{\beta}{\alpha}\lambda, & \mbox{if }y\in (0,d) \\
 \lambda, & \mbox{if }y\in[d,1)
\end{cases}
\end{equation}
where $d$ is given by $d=\frac{\tilde\delta\alpha}{\lambda(\beta-\alpha)}$ and we define
$${\bar{v}}_{i,h}^n(x):=h z\bigg(\frac{x}{h}\cdot   \frac{\nabla{u^n_i(0)}}{|\nabla u^n_i(0)|}\bigg)$$
$$\hat{D}^{n}_{i,h}:=\bigg\{ x\in Q : z'\big(\frac{x}{h}\cdot   \frac{\nabla{u^n_i(0)}}{|\nabla u^n_i(0)|}\big)=\frac{\beta}{\alpha}\lambda \bigg\}.$$

Note that by definition
$$
|\nabla {\bar{v}}_{i,h}^n(x)|= \left|z'\left(\frac{x}{h}\cdot   \frac{\nabla{u^n_i(0)}}{|\nabla u^n_i(0)|}\right)\right|.
$$

By the periodicity of $z$ the sequence $\nabla{\bar{v}}^n_{i,h}$ converges weakly in $L^2(Q)$ to $|\nabla u^n_i(0)|$ when $h\lra 0$, and then ${\bar{v}}^n_{i,h}$ converges  strongly  in $L^2({Q})$, to $\bar{u}^n_i$  and it is bounded in $H^1(Q)$. Now we match the boundary conditions of ${\bar{v}}^n_{i,h}$ with the ones of $\bar{u}^n_i$ using the cut-off function 
\begin{equation}
\phi(y)=\begin{cases} 1, & \mbox{if }y\in Q\backslash Q_R \\
 0, & \mbox{if }y\in Q_{R-\mu}
\end{cases}
\end{equation}
with $|\nabla\phi|=\frac{1}{\mu}$ in $Q_{R}\backslash Q_{R-\mu}$, where $R\in (0,{|{{Q}}|}^{1/N})$, and $\mu\in (0,R)$. We define $$v^n_{i,h}=\phi \bar{u}^n_i+(1-\phi){\bar{v}}_{i,h}^n.$$
Note that we can choose $R$ such that 
\begin{equation}\label{ap1}
\lim_{\mu\lra0}\lim_{h\lra 0}\int_{Q_{R}\backslash Q_{R-\mu}}|\nabla v^n_{i,h}|^2=0
\end{equation}
and
\begin{equation}\label{ap2}
\lim_{\mu\lra 0}\lim_{h\lra 0}\int_{Q_{R}}|\nabla v^n_{i,h}-\nabla {\bar{v}}^n_{i,h}|^2=0
\end{equation}
(cfr. details see \cite{GL}, Remark 13).
By  (\ref{ap1}) and the (\ref{ap2}), it follows that for $n$ large enough, the  energy in $|Q|$ given by $(v^n_{i,h},\hat{D}^n_{i,h})$  is arbitrarily close to that given by $(\bar{v}^n_{i,h},{\hat{D}}^n_{i,h})$ which is (with a simple computation)
$$
\frac{1}{2}\beta\lambda|\nabla u^n_i(0)||{Q}|+\frac{1}{2}\beta\lambda{\tilde\delta}|{Q}|.
$$

So, we conclude that for each $\sigma>0$ there exists $v^n_i:=v^n_{i,h}$ and $\hat{D}^n_{i}:=\hat{D}^n_{i,h}$ (with $h<\!\!<1$) such that
$\|v_i^n-\bar{u}_i^n\|_{L^2(Q)}<\frac\sigma2 |Q|$ and
\begin{equation}\label{vni}
\begin{split}
E(v^n_{i},\hat{D}^n_{i},Q)&\leq E(\bar{v}^n_{i,h},\hat{D}^n_{i,h},Q)+ \sigma|Q|\\& = E(\bar{u}^n_i,\emptyset,{Q})-\frac{1}{2} \beta( |\nabla u^n_i(0)  |-\lambda)^2|{Q}|+\sigma |Q| \\&<E(\bar{u}^n_i,\emptyset,{Q})-\frac{1}{4}\beta\delta^2 |{Q}|,
\end{split}
\end{equation}
where the last inequality comes from the fact that  $\tilde\delta=(|\nabla u^n_{i}(0)|-\lambda)>\delta$ and so (\ref{minvatd}).\\
\smallskip

\emph{Step 2}. We start showing that by the properties (3) of ${Q}$ we have 
\begin{equation}\label{uf}
|E(u^n_i,D_i^n,Q)-E(\bar{u}^n_i,\emptyset,{Q})|\leq o_{\varepsilon}(1)|{Q}|
\end{equation}
indeed we have
\begin{equation*}
\begin{split}
E(u^n_i,D_i^n,Q)-E(\bar{u}^n_i,\emptyset,{Q})&=\int_{Q}\sigma_{D_i^n}|\nabla u^n_i|^{2}dx+k|D_i^n\cap Q|-\beta\int_{Q}|\nabla \bar{u}^n_i|^2dx\\
&\leq \beta\int_{{Q}}(|\nabla u^n_i|^2-|\nabla{\bar{u}^n_i}|^2)dx+k|D_i^n\cap {Q}|
\end{split}
\end{equation*}
and using the property for numbers $|a|^2-|b|^2\leq |a-b|^{2}+2|a-b||b|$ (and also the Holder inequality) we obtain
\begin{equation*}
E(u^n_i,D_i^n,Q)-E(\bar{u}^n_i,\emptyset,{Q})\leq C(\|\nabla u^n_i-\nabla{\bar{u}^n_i}\|^2_{L^2(Q)}+\|\nabla u^n_i-\nabla{\bar{u}^n_i}\|_{L^2(Q)})+k|D_i^n\cap {Q}|,
\end{equation*}
with $C>0$. The opposite inequality follows similarly. Then using the properties (3) we have (\ref{uf}). \\
Now  the function $\hat{w}^n_i:=v^n_i+(u^n_i-\bar{u}^n_i$), with $v^n_i$ as in (\ref{vni}), has the same boundary condition in $Q$ of $u^n_i$. So by (\ref{minvatd}) and (\ref{uf}) we have that for each $s>0$ we can take $\varepsilon$ sufficiently small in such a way that 
\begin{equation} 
E(\hat{w}^n_{i},\hat{D}_n^{i},{Q})\leq E({u}^n_i,D_i^n,{Q})-\frac{1}{4}\beta\delta^2|{Q}| +s|Q|,
\end{equation} 
i.e., the inequality (\ref{minvat2}), as well as $\|\hat{w}_i^n-u_i^n\|_{L^2(Q)}<\sigma|Q|$.\\
\smallskip

\emph{Step 3}.
We come back to whole of $\Omega$ and consider the problem
$$\inf_{w,D'}\bigg\{\frac{1}{2}\int_{\Omega}\sigma_{D'}|\nabla{w}|^{2}dx+k|D'| : (w-u^n_i)\in H^{1}_0(\Omega), D'\supseteq D_i^n \bigg\}.$$
Iterating the previous steps for each square of the covering of $E^n$ we can construct for each $s>0$ and $\sigma>0$ a pair $(\tilde{w}^n_i,\tilde{D}_i^n)$ with $\tilde{D}_i^n\supseteq D_i^n$ and $\tilde{D}_i^n\setminus E^n=D_i^n\setminus E^n$, and $w^n_i=u^n_i$ outside $E_n$, such that

\begin{equation}\label{abs0}
E(\tilde{w}^n_i,\tilde{D}_i^n,\Omega)\leq E(u^n_i,D_i^n,\Omega)+(-\frac{1}{4}\beta\delta^2+s)|E^n|.
\end{equation}
Since for $n\geq \bar{n}$, $\varepsilon<\!\!<1$ and $\delta>0$ we can have $\sigma$ and $s$ small as we want we obtain that there exist $c>0$ such that
$$
E(\tilde{w}^n_i,\tilde{D}_i^n,\Omega)\leq E(u^n_i,{D}_i^n,\Omega)-c,
$$
i.e., the inequality (\ref{minel}), together with $\|\tilde{w}_i^n-u_i^n\|_{L^2(Q)}<\sigma c$.\\

Now we link the result for the elastic part and dissipation of the energy of $(u^n_i, D_i^n)$ with the (almost) minimality property of $(u^n_i, D_i^n)$ for the \textit{total} energy (elastic+kinetic) in such a way to obtain a contradition and so the validity of (\ref{thco}).
By the (almost) minimality property of $(u^n_i,D_i^n)$ we have 
\begin{equation}\label{ami}
\begin{split}
& E(u^n_i, D_i^n)  +\frac{1}{2} \left \| \frac{u^n_i-u_{i-1}^n}{\Delta t}- \frac{u_{i-1}^n-u_{i-2}^n}{\Delta t}   \right\|^{2}_{L^{2}(\Omega)}  \leq \\
& E(\tilde{w}^n_i, \tilde{D}_i^n)  +\frac{1}{2}   \left \| \frac{\tilde{w}^n_i-u_{i-1}^n}{\Delta t}-  \frac{u_{i-1}^n-u_{i-2}^n}{\Delta t}\right\|^{2}_{L^{2}(\Omega)} + \frac{\tau_n}{2^i }
\end{split}
\end{equation}
with $(\tilde{w}^n_i,\tilde{D}^n_i)$ as in (\ref{abs0}). But for $\sigma$ small, the two norms above are arbitrarily close.  So, for $n$ large, we get a contradiction and we conclude the proof. 
\end{proof}

\end{document}